
\documentclass[12pt,reqno]{amsart}
\usepackage[margin=1in]{geometry}

\usepackage{hyperref}
\usepackage{amsthm}
\usepackage{amssymb}
\usepackage{amsxtra}
\usepackage{kopplagarias-38} 

\usepackage{epsfig}
\usepackage{verbatim}
\usepackage{cleveref}
\usepackage{tikz-cd}
\usepackage{autonum} 
\usepackage{cite} 
\usepackage{hhline}

\numberwithin{equation}{section}

\theoremstyle{plain}
\newtheorem{theorem}{Theorem}[section]
\newtheorem{thm}[theorem]{Theorem}
\newtheorem{prop}[theorem]{Proposition}
\newtheorem{cor}[theorem]{Corollary}

\theoremstyle{definition}
\newtheorem{defn}[theorem]{Definition}

\newtheorem{prob}[theorem]{Problem}

\theoremstyle{remark}
\newtheorem{rmk}[theorem]{Remark}

\AddToHook{env/prop/begin}{\crefalias{theorem}{prop}}
\AddToHook{env/cor/begin}{\crefalias{theorem}{cor}}
\AddToHook{env/lem/begin}{\crefalias{theorem}{lem}}
\AddToHook{env/que/begin}{\crefalias{theorem}{que}}
\AddToHook{env/conj/begin}{\crefalias{theorem}{conj}}
\AddToHook{env/assu/begin}{\crefalias{theorem}{assu}}
\AddToHook{env/defn/begin}{\crefalias{theorem}{defn}}
\AddToHook{env/nota/begin}{\crefalias{theorem}{nota}}
\AddToHook{env/cond/begin}{\crefalias{theorem}{cond}}
\AddToHook{env/egz/begin}{\crefalias{theorem}{egz}}
\AddToHook{env/prob/begin}{\crefalias{theorem}{prob}}
\AddToHook{env/rmk/begin}{\crefalias{theorem}{rmk}}

\begin{document}

\title[Unit-generated orders I]{Unit-generated orders of real quadratic fields\\ I. Class number bounds}
\author{Gene S. Kopp}
\address{Department of Mathematics, Louisiana State University, Baton Rouge, LA, USA}
\email{kopp@math.lsu.edu}
\author{Jeffrey C. Lagarias}
\address{Department of Mathematics, University of Michigan, Ann Arbor, MI, USA}
\email{lagarias@umich.edu}

\keywords{real quadratic field, non-maximal order, algebraic units, class number, Richaud--Degert type, Brauer--Siegel theorem, binary quadratic form}
\subjclass{11R29 (primary), 
11R11, 
11R27, 
11Y40 (secondary)} 

\date{April 21, 2026}

\begin{abstract} 
Unit-generated orders of a quadratic field are orders of the form $\mathcal{O} = \mathbb{Z}[\varepsilon]$, where $\varepsilon$ is a unit in the quadratic field. If the order $\mathcal{O}$ is a maximal order of a real quadratic field, then the quadratic number field is necessarily of a restricted form, being of narrow Richaud--Degert type. However, every real quadratic field contains infinitely many distinct unit-generated orders. They are parametrized as $\mathcal{O} = \mathcal{O}_{n}^{\pm}$ having quadratic discriminants $\Delta(\mathcal{O}) = \Delta_{n}^{+} = n^2 - 4$ (for $n \geq 3$) and $\Delta(\mathcal{O}) = \Delta_{n}^{-} = n^2 + 4$ (for  $n \geq 1$). We show the (wide or narrow) class numbers of unit-generated orders satisfy $\log \left|{\rm Cl}(\mathcal{O})\right| \sim \log \frac{1}{2}\left|\Delta(\mathcal{O})\right|$ as $\left|\Delta(\mathcal{O})\right| \to \infty$, using a result of L.-K. Hua. We deduce that there are finitely many unit-generated quadratic orders of class number one and finitely many unit-generated quadratic orders whose class group is $2$-torsion. We classify all unit-generated real quadratic orders having class number one. We provide numerical lists of quadratic unit-generated orders whose class groups are $2$-torsion for $\Delta \leq 10^{10}$, for both wide and narrow class groups. These lists are conjecturally complete for all $\Delta$. 
\end{abstract}

\maketitle

\section{Introduction}\label{sec:intro}

An order $\OO$ of a number field $K$ is a subring of its algebraic integers $\OO_K$ that is a $\Z$-module of rank $n= [K: \Q]$. 
Orders $\OO$ of quadratic fields are uniquely determined by their discriminant $\Delta$, 
which can be any nonsquare integer congruent to $0$ or $1$ modulo $4$; see \cite[Theorem 5.1.7]{Halter-Koch13}. 
We write $\OO_{\Delta} = \Z\!\left[\frac{\Delta + \sqrt{\Delta}}{2}\right]$ for the order of discriminant $\Delta$. 

This paper focuses on a special subclass of orders of quadratic fields.

\begin{defn}
An order $\OO$ of a quadratic number field $\Q(\sqrt{D})$ is a {\em unit-generated order} if it is additively generated as a $\Z$-module by its set of units $\OO^{\times}$. 
\end{defn}

We show that unit-generated orders are precisely those with discriminants falling in two parametric families.
\begin{enumerate}
\item
$\OO_{\Delta}$ 
with $\Delta = \Delta^{+}_n = n^2-4$ has the generating unit $\e_n^{+} = \frac{1}{2} (n + \sqrt{n^2- 4})$, 
which has norm $\Nm(\e_n^{+}) = +1$.
\item
$\OO_{\Delta}$ 
with $\Delta = \Delta^{-}_n = n^2+4$ has the generating unit $\e_n^{-} = \frac{1}{2} (n + \sqrt{n^2+4})$, 
which has norm $\Nm(\e_n^{-}) = -1$.
\end{enumerate}
In the notation $\Delta^{\pm}_n$, the sign $\pm$ indicates the norm of the associated generating unit. 

Every real quadratic field 
contains infinitely many 
orders that are unit-generated orders, 
as (taking $\Delta = \disc(K)$) it contains infinitely many units $\e>1$, with $\e = \frac{1}{2}(a+ b\sqrt{\Delta}) = \frac{1}{2} (a + \sqrt{a^2 \mp 4})$ and $a^2-b^2\Delta = \pm 4$. 

The main object of this paper is to prove results on class numbers in the two
parametric  families  of unit-generated quadratic orders.  These include: 
\begin{enumerate}
\item[(1)]
An asymptotic estimate for the size of class numbers of $\OO_{n^2 \mp 4}$ as $n \to \infty$.
The estimate is ineffective in general, but includes an effective computability result for subclasses of
unit-generated orders in which the associated fundamental discriminants are bounded.
\item[(2)]
A complete classification of unit-generated
quadratic orders having class number one.
\item[(3)]  
An (ineffective) proof of finiteness of the number of unit-generated orders whose wide class group is $2$-torsion.
This set includes all unit-generated orders having one class per genus.
We tabulate a list of unit-generated quadratic orders whose wide class group is $2$-torsion, complete for $\Delta < 10^{10}$,
and identify all such orders having one class per genus for $\Delta < 10^{10}$.
The list is conjecturally complete for both problems.
\end{enumerate}

The family $\Delta^{+}(n)$ 
is of special interest 
because
its members appear phenomenologically in the structure of
SIC-POVMs
in quantum information theory; see \Cref{subsec:13}.
Numerical evidence  for the connection between SIC-POVMs and the orders of discriminant $\Delta^{+}(n)$
is detailed in \cite{kopplagarias2}. 

\subsection{Results}\label{subsec:11}

Section \ref{sec:unit-generated} deals with taxonomy. 
We show that every unit-generated quadratic order is one of:
\begin{enumerate}
\item The real quadratic order $\OO_{\Delta^+_n}$ for $\Delta^+_n = n^2 - 4$ with $n \geq 3$,
\item The real quadratic order $\OO_{\Delta^-_n}$ for $\Delta^-_n = n^2 + 4$ with $n \geq 1$,
\item The imaginary quadratic order $\OO_{\Delta^+_1} = \OO_{-3} = \Z\!\left[\frac{1+\sqrt{-3}}{2}\right]$, or
\item The imaginary quadratic order $\OO_{\Delta^+_0} = \OO_{-4} = \Z[\sqrt{-1}]$.
\end{enumerate}
The two imaginary quadratic orders are special, as they are
the only two imaginary quadratic orders having extra units beyond $\{\pm1\}$.
In general we restrict attention to real quadratic orders.

All of the orders in these parameter ranges are distinct, with one exception:\ The orders $\OO_{\Delta^+_3} = \OO_{\Delta^-_1}$ are both the maximal order of $\Q(\sqrt{5})$.
For the remaining parameter values in the range $n \ge 0$, $\Delta_{2}^{+}= 0$ and $\Delta_0^{-}=4$ are not discriminants of quadratic orders.

\subsubsection{Class number bounds}\label{subsubsec:121}
Let $\Cl(\Delta) = \Cl(\OO_\Delta)$ denote the (wide) class group of the quadratic order of discriminant $\Delta$,
and let
$h_{\Delta}  
= \abs{\Cl(\OO_\Delta)}$ 
denote the class number of the order.
We write $\Delta = n^2 \mp 4 = f_{\Delta}^2\Delta_0$ where
$\Delta_0$ is a fundamental discriminant, the discriminant of the associated maximal order $\OO_K$,
and $f=f_{\Delta}$ is the conductor of the order.

It is known that there exist infinitely many (non-maximal) real quadratic orders of class number one. 
The first examples were  given by Dirichlet \cite{Dirichlet:1856} in 1855 in the context of binary quadratic forms.
For example, 
$\Delta= 5^{2k+1}$ for $k \ge0$ are discriminants of orders in $\Q(\sqrt{5})$, which all have $h_{\Delta}=1$.
(See \cite[Theorem 5.9.12]{Halter-Koch13} and \cite[Section 9, Aufgabe 5]{Zagier81}). 

\begin{thm}\label{thm:main}
For unit-generated orders having $\Delta_n^{\pm}= n^2 \mp 4$,
\begin{equation}\label{eq:main1}
\log \abs{\Cl(\Delta_n^{+})} = \log n + o(\log n) \quad \mbox{as} \quad n \to \infty,
\end{equation}
and
\begin{equation}\label{eq:main2}
\log \abs{\Cl(\Delta_n^{-})} = \log n + o(\log n) \quad \mbox{as} \quad n \to \infty.
\end{equation}
\end{thm}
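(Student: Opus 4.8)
The plan is to combine the class number formula for non-maximal orders with the Brauer–Siegel asymptotic for the associated maximal orders, and to control the regulator of the unit-generated orders directly by their explicit fundamental units. Write $\Delta_n^{\pm} = f^2 \Delta_0$ with $\Delta_0$ a fundamental discriminant and $f = f_{\Delta_n^{\pm}}$ the conductor. Recall that the class number of an order satisfies
\begin{equation}\label{eq:cond-formula}
h_{\Delta} = \frac{h_{\Delta_0}\, f}{[\OO_{\Delta_0}^{\times} : \OO_{\Delta}^{\times}]} \prod_{p \mid f}\left(1 - \left(\frac{\Delta_0}{p}\right)\frac{1}{p}\right),
\end{equation}
so that, up to the unit-index factor (which lies in $\{1,2,3\}$, indeed $\{1,2\}$ once we discard $\Q(\sqrt{5})$) and a product over $p \mid f$ that is $f^{1+o(1)/\log\log f}$ in size, we have $h_{\Delta_n^{\pm}} = h_{\Delta_0}\, f^{1+o(1)}$. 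Equivalently, the analytic class number formula applied directly to the order gives $h_{\Delta} R_{\Delta} = \frac{1}{2}\sqrt{\Delta}\, L(1,\chi_{\Delta})$ where $R_{\Delta} = \log \e_{\Delta}$ is the regulator of $\OO_\Delta$ and $\e_\Delta$ its fundamental unit; this is the cleaner route, and I would take it.

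The key observation — this is where the hypothesis that the order is unit-generated enters — is that $\e_n^{\pm} = \frac{1}{2}(n + \sqrt{n^2 \mp 4})$ is itself a unit of $\OO_{\Delta_n^{\pm}}$, and it is in fact the fundamental unit of that order: any smaller unit $\eta > 1$ would have $\eta = \frac12(a + b\sqrt{\Delta})$ with $1 < \eta < \e_n^{\pm}$, forcing $1 \le b < 1$ or a norm equation with no solution, a contradiction handled in \Cref{sec:unit-generated}. Hence $R_{\Delta_n^{\pm}} = \log \e_n^{\pm} = \log n + O(1/n^2) = \log n + o(\log n)$. Plugging this into the class number formula,
\begin{equation}\label{eq:cnf-applied}
\log h_{\Delta_n^{\pm}} = \log\sqrt{\Delta_n^{\pm}} - \log R_{\Delta_n^{\pm}} + \log L(1,\chi_{\Delta_n^{\pm}}) + O(1) = \log n - \log\log n + \log L(1,\chi_{\Delta_n^{\pm}}) + O(1),
\end{equation}
where $\chi_{\Delta}$ is the character attached to the order (imprimitive when $f > 1$). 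So the whole problem reduces to showing $\log L(1,\chi_{\Delta_n^{\pm}}) = o(\log n)$.

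The upper bound $L(1,\chi_{\Delta}) = O(\log \Delta)$ is elementary (partial summation / Pólya–Vinogradov), contributing only $O(\log\log n)$. The lower bound $\log L(1,\chi_{\Delta_n^{\pm}}) \gg -o(\log n)$ is the main obstacle and is exactly where the result becomes ineffective in general: for the primitive character $\chi_{\Delta_0}$ one has Siegel's bound $L(1,\chi_{\Delta_0}) \gg_{\varepsilon} \Delta_0^{-\varepsilon}$, ineffectively, and then $L(1,\chi_{\Delta_n^{\pm}}) = L(1,\chi_{\Delta_0})\prod_{p \mid f}(1 - \chi_{\Delta_0}(p)/p) \gg L(1,\chi_{\Delta_0})\prod_{p\mid f}(1 - 1/p) \gg \Delta_0^{-\varepsilon}/\log\log f$, which is $\Delta^{-\varepsilon}$-ish and gives $\log L(1,\chi_{\Delta_n^{\pm}}) = o(\log \Delta) = o(\log n)$ after letting $\varepsilon \to 0$. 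This is essentially the content of the cited result of L.-K. Hua, who gave precisely such a lower bound for $L(1,\chi)$ tailored to real quadratic discriminants; I would invoke Hua's estimate to get the $o(\log n)$ cleanly and uniformly over both families at once. The effective-computability refinement for subfamilies with $\Delta_0$ bounded then follows because for fixed $\Delta_0$ the quantity $L(1,\chi_{\Delta_0})$ is a fixed positive constant and only the harmless Euler factors over $p \mid f$ remain, all effectively bounded. Combining the two sides of \eqref{eq:cnf-applied} yields $\log h_{\Delta_n^{\pm}} = \log n + o(\log n)$, which is \eqref{eq:main1} and \eqref{eq:main2} since $\Cl$ denotes the wide class group and $|\Cl(\Delta)| = h_\Delta$; the narrow class group differs by a factor of at most $2$, contributing $O(1)$, so the same asymptotic holds in the narrow sense as claimed in the abstract.
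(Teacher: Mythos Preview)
Your main argument---the analytic class number formula $h_\Delta\log\e_\Delta=\tfrac12\sqrt\Delta\,L(1,\chi_\Delta)$ for the order, combined with Siegel's ineffective lower bound on $L(1,\chi_{\Delta_0})$ and Mertens-type control of the Euler factors at $p\mid f$---is correct and is exactly what underlies Hua's theorem (the paper's \Cref{thm:hua}). The paper simply invokes Hua's result as a black box: from $\log(h_\Delta\log\e_\Delta)\sim\log\sqrt\Delta$ together with $\log\e_{\Delta_n^\pm}=\log n+O(1)$ and $\log\sqrt{\Delta_n^\pm}=\log n+O(1)$ it reads off $\log h_\Delta=\log n+o(\log n)$ in two lines, whereas you unpack the Brauer--Siegel step explicitly. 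The two arguments are the same in substance; yours makes the ineffectivity and the bounded-$\Delta_0$ effective refinement visible at the level of $L(1,\chi)$, which is a virtue.

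One genuine slip in the paragraph you ultimately abandon: the unit index $[\OO_{\Delta_0}^\times:\OO_\Delta^\times]$ is \emph{not} bounded by $3$ for unit-generated orders. It equals the exponent $j$ for which $\e_n^\pm=\e_{\Delta_0}^j$, and this can be arbitrarily large---for instance, the orders $\Z[\e_5^j]\subset\Q(\sqrt5)$ have unit index $j$. (You may be conflating this with the index $[\OO_K^\times:\{\pm1\}\e_\Delta^\Z]$ or with the torsion part.) Since you switch to the direct class number formula, where the regulator $\log\e_\Delta$ absorbs this index, the error does not propagate into your actual proof.
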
 

This result is proved as  \Cref{thm:bs00}(1).
It is an immediate consequence of a theorem of Hua \cite[Theorem 12.15.4]{Hua:1982}, 
which gives an extension to quadratic orders of Siegel's original version of the Brauer--Siegel theorem.
The error term in this result is ineffective.

\Cref{thm:bs00}(2) shows that if one restricts to families of unit-generated orders having bounded 
associated fundamental discriminants $(\Delta_n^{\pm})_0 \le N$,
then there is an effective remainder term $O_{\!N}(\log\log n)$.
Bounded families arise in the context of SIC-POVMs (discussed in \Cref{subsec:13}), 
for example, in the Fibonacci--Lucas family of SIC-POVMs studied in \cite{GrasslS:2017}.

\subsubsection{Class number one}\label{subsubsec:122}

We give a complete classification of all unit-generated quadratic orders having class number one.

Classifying the set of discriminants of unit-generated real quadratic orders having class number one
is analogous to the famous Gauss problem of classifying all discriminants $\Delta<0$ having class number 
one. 
The imaginary quadratic discriminant problem for maximal orders was solved by the Heegner--Stark--Baker Theorem. 
Stark \cite{Stark:07} notes that the non-maximal order case for imaginary quadratic fields is solved as a consequence of the maximal order case,
giving the four discriminants $\{-12, -16, -27, -28\}$, using the fact that non-maximal orders can have class number one
only if the maximal order also has class number one.

Our first result gives a complete classification for unit-generated quadratic orders that are maximal orders. 

\begin{thm}\label{thm:M-classno-1}
The maximal quadratic orders of discriminant $\Delta_n^{+}= n^2 - 4$ 
having class number one are those for $n \in \{0, 1, 3, 4, 5, 9, 21\}$, having 
$\Delta \in \{-4, -3, 5,  12,  21, 77,  437\}$.
The maximal quadratic orders of discriminant
$\Delta_n^{-}= n^2 + 4$ having class number one are 
those for $n \in \{1, 2, 3, 5, 7, 13, 17\}$, having 
$\Delta \in \{ 5, 8, 13,  29,  53, 173, 293\}$.
\end{thm}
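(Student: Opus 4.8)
The plan is to reduce the classification of class-number-one maximal orders in each family to finitely many cases by combining the Brauer--Siegel-type asymptotics (\Cref{thm:main}) with classical effective lower bounds for class numbers of real quadratic fields of Richaud--Degert type, and then to finish by direct computation. First I would observe that for the family $\Delta_n^+ = n^2-4$, the fundamental discriminant $(\Delta_n^+)_0$ is obtained by dividing out the conductor $f_n^+$, and similarly for $\Delta_n^- = n^2+4$; in both cases the field $\Q(\sqrt{n^2\mp4})$ is of narrow Richaud--Degert type, so the fundamental unit is small, of size $O(n)$, and hence the analytic class number formula $h_{\Delta_0}\, R_{\Delta_0} = \tfrac{1}{2}\sqrt{\Delta_0}\, L(1,\chi_{\Delta_0})$ together with Siegel's (or Tatuzawa's semi-effective, or an effective Dirichlet-style) lower bound for $L(1,\chi)$ forces $h_{\Delta_0}\to\infty$. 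Combined with the elementary divisibility relation between $h_\Delta$ and $h_{\Delta_0}$ (the order class number is a positive integer multiple of a controlled quantity built from $h_{\Delta_0}$, the conductor, and ramification factors), this shows $h_{\Delta_n^\pm}=1$ can occur only for finitely many $n$, with an explicit bound on $n$ once one pins down the constants.

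The key steps, in order, are: (i) restrict to the maximal-order subfamily, i.e. to those $n$ for which $n^2\mp4$ is itself a fundamental discriminant (equivalently $f_n^\pm=1$), and record the congruence/squarefree conditions this imposes; (ii) invoke Tatuzawa's refinement of Siegel's theorem, which gives an effective lower bound $L(1,\chi_{\Delta_0}) \gg_\epsilon \Delta_0^{-\epsilon}$ valid for all but at most one exceptional discriminant, together with the Richaud--Degert bound $R_{\Delta_0} = \log \e_{\Delta_0} \ll \log \Delta_0$, to deduce an explicit $N_0$ such that $h_{\Delta_0}>1$ for all fundamental $\Delta_0$ of the relevant shape with $\Delta_0>N_0$ — handling the single possible Tatuzawa exception separately by a Deuring--Heilbronn or GRH-free ad hoc argument as in the literature on Richaud--Degert fields; (iii) for the finitely many remaining $n$ (i.e. $\Delta_n^\pm \le N_0$), compute $h_{\Delta_n^\pm}$ directly — e.g. by counting reduced binary quadratic forms of the given discriminant, or by a short table lookup — and read off exactly the lists $n\in\{0,1,3,4,5,9,21\}$ and $n\in\{1,2,3,5,7,13,17\}$; (iv) double-check the boundary/degenerate parameter values ($n=0,1,2$) against the taxonomy in \Cref{sec:unit-generated}, in particular noting the coincidence $\OO_{\Delta_3^+}=\OO_{\Delta_1^-}=\OO_{\Q(\sqrt5)}$ and that $\Delta_2^+=0$, $\Delta_0^-=4$ are not discriminants.

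The main obstacle is step (ii): Siegel's bound as used in \Cref{thm:main} is ineffective, so to get an \emph{explicit} $N_0$ — and hence a provably complete list rather than a conditional one — one must replace it with an effective or semi-effective substitute. The cleanest route is to exploit the fact that these are Richaud--Degert fields of the narrow type $n^2\pm4$, for which the fundamental unit and the regulator are explicitly bounded; this is precisely the setting in which the class-number-one problem for real quadratic fields has been solved unconditionally (work of Biró, Byeon--Kim--Lee, Mollin, and others on $\Z[\sqrt{n^2+4}]$-type fields), so one can either cite these results directly or reproduce the Tatuzawa-plus-exceptional-discriminant argument specialized to our two families. I expect the remaining steps to be routine: step (i) is a congruence calculation, step (iii) is a finite computation over an explicitly bounded range, and step (iv) is bookkeeping already done in the taxonomy section.
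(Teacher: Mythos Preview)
Your proposal eventually lands on citing Bir\'{o} and Byeon--Kim--Lee, which is exactly what the paper does for half of the cases. But there are two genuine gaps.

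First, the Tatuzawa route you sketch does not close. Tatuzawa leaves one possible exceptional discriminant, and the phrase ``Deuring--Heilbronn or GRH-free ad hoc argument as in the literature'' hides the fact that \emph{the} literature here is precisely Bir\'{o}'s method: it is a substantial new technique (special values of partial zeta functions and congruence sieving), not a routine elimination of a Siegel zero. There is no cheaper substitute known. So your option ``reproduce the Tatuzawa-plus-exceptional-discriminant argument'' is not a proof; your option ``cite these results directly'' is the only viable one.

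Second, and more structurally, Bir\'{o}'s theorem (Yokoi's conjecture) is stated for $\Delta=n^2+4$ with $n$ odd, and Byeon--Kim--Lee treat squarefree $\Delta=n^2-4$, which again forces $n$ odd. Neither covers the even-$n$ maximal orders, and such orders do occur (e.g.\ $n=2$, $\Delta=8$; $n=4$, $\Delta=12$). The paper's decomposition is by parity of $n$: the odd cases are handled by citing Bir\'{o} and BKL, while the even cases are dispatched by an elementary genus-theory argument (\Cref{thm:RQO1}, \Cref{thm:RQO2}) showing that, apart from a few small exceptions, $h_\Delta$ is even whenever $n$ is even. You do not mention this split, and your step~(i) (``record the congruence/squarefree conditions'') does not substitute for it, because fundamental discriminants with $n$ even genuinely exist and fall outside the cited theorems. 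This is the missing idea: the odd cases are deep and must be cited, but the even cases are new to this paper and are settled by checking Halter-Koch's parity criterion for $h_\Delta$.
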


\Cref{thm:M-classno-1} is proved in \Cref{subsec:41a}
by combining four results. The first is 
a major result of Bir\'{o} \cite{Biro03a} handling the case $n^2+4$ with $n$ odd. The second is a  
a result of Byeon, Kim, and Lee \cite{BKL} handling the case $n^2-4$ with $n$ odd,
using a modification of Bir\'{o}'s  method.
This paper completes the classification in 
the remaining two cases, $n^2+4$ with $n$ even  
(\Cref{thm:RQO1})
and $n^2-4$ with $n$ even
(\Cref{thm:RQO2}),
using genus theory.

The next result gives a complete classification for unit-generated orders that are non-maximal orders.

\begin{thm}\label{thm:M-classno-2}
The non-maximal quadratic orders of discriminant $\Delta_n^{+}= n^2 - 4$ 
having class number one are those for $n \in \{6, 7, 11\}$, having 
$\Delta \in \{32, 45, 117\}$.
The non-maximal quadratic orders of discriminant
$\Delta_n^{-}= n^2 + 4$ having class number one are 
those for $n \in \{4, 8, 11\}$, having 
$\Delta \in \{ 20, 68, 125\}$.
\end{thm}

The proof of \Cref{thm:M-classno-2} builds on \Cref{thm:M-classno-1}. The idea of the proof is similar to the imaginary quadratic order case, in which all the over-orders of a non-maximal imaginary quadratic order having class number one must also be unit-generated orders. In the real quadratic unit-generated case, however, there is an additional exceptional case; see Section \ref{subsec:42a}.
The exceptional case is covered by a major result of Bir\'{o} \cite{Biro03b} solving Chowla's conjecture for maximal orders with $\Delta=4n^2+1$.

\subsubsection{One class per genus and $2$-torsion class group}\label{subsubsec:123}

The genus theory of Gauss extends to all (real and imaginary) quadratic orders. 
The principal genus theorem of Gauss asserts that the (narrow) ideal classes in the principal genus
are the squares of all ideal classes.
Two ideal classes $[\aa]$ and $[\bb]$ in the narrow ideal class group $\Cl^+(\OO)$ 
(defined in \Cref{subsec:23})
correspond under Gauss composition 
to binary quadratic form classes of the same genus if and only if $[\aa\bb^{-1}] \in \Cl^+(\OO)^2$.
The genus group is $\Cl^+(\OO)/\Cl^+(\OO)^2$, which is isomorphic to the $2$-torsion subgroup $\Cl^+(\OO)[2]$.
(See \cite[Sections 5.6 and 6.5]{Halter-Koch13}.)

In \Cref{subsec:52}, we establish the following result regarding the wide class group and its $2$-torsion subgroup.

\begin{thm}\label{thm:twotorsion}
There are finitely many unit-generated real quadratic orders $\OO$ for which $\Cl(\OO_\Delta) = \Cl(\OO_\Delta)[2]$.
\end{thm}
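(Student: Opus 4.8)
The plan is to deduce \Cref{thm:twotorsion} directly from \Cref{thm:main}, using only the elementary fact from genus theory that a $2$-torsion class group of a quadratic order of discriminant $\Delta$ must be very small --- of size $\abs{\Delta}^{o(1)}$ --- which is incompatible with the lower bound $\abs{\Cl(\Delta_n^\pm)} = n^{1+o(1)}$ supplied by \Cref{thm:main} once $n$ is large.

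First I would record the genus-theoretic bound. By the genus theory of quadratic orders (see the discussion preceding the theorem, and \cite[Sections 5.6 and 6.5]{Halter-Koch13}), the genus group $\Cl^+(\OO_\Delta)/\Cl^+(\OO_\Delta)^2 \cong \Cl^+(\OO_\Delta)[2]$ has order $2^t$ with $t \le \omega(\Delta)+1$, where $\omega(m)$ denotes the number of distinct prime divisors of $m$. If $\Cl(\OO_\Delta) = \Cl(\OO_\Delta)[2]$, then $\Cl(\OO_\Delta)^2$ is trivial, and since $\Cl(\OO_\Delta)$ is a quotient of $\Cl^+(\OO_\Delta)$ the group $\Cl(\OO_\Delta) = \Cl(\OO_\Delta)/\Cl(\OO_\Delta)^2$ is a quotient of $\Cl^+(\OO_\Delta)/\Cl^+(\OO_\Delta)^2$; hence $\abs{\Cl(\OO_\Delta)} \le 2^{\,\omega(\Delta)+1}$.

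Next I would apply this with $\Delta = \Delta_n^\pm = n^2 \mp 4$, together with the classical estimate $\omega(m) = O(\log m/\log\log m)$, which gives $\omega(\Delta_n^\pm) = O(\log n/\log\log n)$. Consequently, for any unit-generated real quadratic order whose wide class group is $2$-torsion,
\[
\abs{\Cl(\Delta_n^\pm)} \le 2^{\,\omega(\Delta_n^\pm)+1} = \exp\!\big(O(\log n/\log\log n)\big) = n^{o(1)}.
\]
But \Cref{thm:main} gives $\log\abs{\Cl(\Delta_n^\pm)} = \log n + o(\log n)$, i.e.\ $\abs{\Cl(\Delta_n^\pm)} = n^{1+o(1)}$, and these two estimates contradict one another for all sufficiently large $n$. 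Therefore $\Cl(\OO_\Delta) = \Cl(\OO_\Delta)[2]$ can hold for at most finitely many $n$; since each $n$ contributes at most the two orders $\OO_{\Delta_n^+}$ and $\OO_{\Delta_n^-}$, there are finitely many such unit-generated orders.

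No deep obstacle arises: the argument is simply ``the genus number is negligible next to the true class number.'' The points meriting care are the exact genus-number bound for \emph{non-maximal} orders --- where primes dividing the conductor $f_\Delta$ can also contribute, so one invokes the order-level genus theory of \cite{Halter-Koch13} rather than the classical fundamental-discriminant version --- and the harmless factor-of-$2$ passage between the narrow and wide class groups. Since \Cref{thm:main} is ineffective, so is the resulting bound on $n$; an effective variant within families of bounded fundamental discriminant would instead follow from \Cref{thm:bs00}(2).
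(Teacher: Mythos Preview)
Your proposal is correct and follows essentially the same route as the paper: bound the $2$-torsion class group by the genus number $2^{\mu(\Delta)-1} \le 2^{\omega(\Delta)}$ via order-level genus theory, use $\omega(\Delta) = O(\log\Delta/\log\log\Delta)$ to get $\log\abs{\Cl(\OO_\Delta)} = o(\log n)$, and contradict the asymptotic $\log\abs{\Cl(\Delta_n^\pm)} = \log n + o(\log n)$ from \Cref{thm:main} (equivalently \Cref{thm:bs00}(1)). Your closing remarks on the non-maximal-order genus theory, the narrow/wide passage, and ineffectivity match the paper's treatment as well.
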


This result is ineffective; it depends on the ineffective result of Hua \cite[Theorem 12.15.4]{Hua:1982}.
It has the following corollary for the one class per genus problem.

\begin{cor}\label{cor:oneclassgenus}
There are finitely many unit-generated quadratic orders $\OO = \OO_\Delta$ for which $\Cl^+(\OO) = \Cl^+(\OO)[2]$.
Equivalently, there are finitely many $\Delta= n^2 \pm 4$ such that the primitive integer binary quadratic forms of discriminant $\Delta$ 
have one class per genus.
\end{cor}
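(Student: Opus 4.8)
The plan is to derive this as a formal consequence of \Cref{thm:twotorsion}, using only the standard relationship between the narrow and wide class groups. For any real quadratic order $\OO_\Delta$ there is a canonical surjection $\Cl^+(\OO_\Delta) \twoheadrightarrow \Cl(\OO_\Delta)$, obtained by allowing arbitrary principal ideals in place of those generated by totally positive elements; its kernel has order $1$ or $2$, being trivial precisely when $\OO_\Delta^\times$ contains a unit of norm $-1$. The one substantive step is the elementary observation that $2$-torsion is inherited by quotients: if $\Cl^+(\OO_\Delta) = \Cl^+(\OO_\Delta)[2]$, then the homomorphic image $\Cl(\OO_\Delta)$ also satisfies $\Cl(\OO_\Delta) = \Cl(\OO_\Delta)[2]$. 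Hence the set of unit-generated real quadratic orders with narrow class group $2$-torsion is contained in the set with wide class group $2$-torsion, and the latter is finite by \Cref{thm:twotorsion}. Finally, the only unit-generated imaginary quadratic orders are $\OO_{-3}$ and $\OO_{-4}$ (by the taxonomy of \Cref{subsec:11}), and for imaginary quadratic orders the narrow and wide class groups coincide; adjoining these at most two orders does not affect finiteness. This proves the first assertion.

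For the reformulation in terms of binary quadratic forms, I would invoke the dictionary recalled in \Cref{subsubsec:123}: Gauss composition identifies the group of proper equivalence classes of primitive integral binary quadratic forms of discriminant $\Delta$ with the narrow class group $\Cl^+(\OO_\Delta)$, and under this identification the genera are exactly the cosets of $\Cl^+(\OO_\Delta)^2$ in $\Cl^+(\OO_\Delta)$ (the principal genus being $\Cl^+(\OO_\Delta)^2$). Thus "one class per genus" means $\Cl^+(\OO_\Delta)^2 = \{1\}$, i.e. $\Cl^+(\OO_\Delta) = \Cl^+(\OO_\Delta)[2]$ — literally the hypothesis of the first assertion. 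So the two statements in the corollary are the same fact phrased in ideal-theoretic and form-theoretic language, and the asserted equivalence requires no further argument.

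There is no genuine obstacle here: the corollary is immediate once \Cref{thm:twotorsion} is available, and the only point needing care is the direction of the narrow-to-wide comparison — one must use that passing from $\Cl^+(\OO)$ to $\Cl(\OO)$ is a surjection, so that the target can only become "smaller", rather than mistaking it for an inclusion. Everything else is bookkeeping and the standard genus-theoretic translation.
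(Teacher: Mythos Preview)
Your argument is correct and follows exactly the paper's approach: use the surjection $\Cl^+(\OO) \to \Cl(\OO)$ to deduce that $2$-torsion of the narrow class group forces $2$-torsion of the wide class group, then invoke \Cref{thm:twotorsion}. You are in fact slightly more thorough than the paper's proof, which does not explicitly address the two imaginary quadratic unit-generated orders or spell out the form-theoretic equivalence; both additions are harmless and correct.
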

\begin{proof}
There is a surjective map $\Cl^+(\OO) \to \Cl(\OO)$. Thus, if $\Cl^+(\OO)$ is $2$-torsion, then $\Cl(\OO)$ is $2$-torsion. 
So the finiteness of unit-generated $\OO$ for which $\Cl(\OO)$ is $2$-torsion implies the finiteness of such $\OO$ satisfied 
the stronger condition that $\Cl^+(\OO)$ is $2$-torsion.
\end{proof}

\Cref{thm:twotorsion} leads to the following problem.
 
\begin{prob}\label{prob:2}
Determine all unit-generated real quadratic orders $\OO_{\Delta}$ having one class per genus,
that is,
having $\Cl^{+}(\OO_\Delta) = \Cl^{+}(\OO_\Delta)[2]$.
More generally, determine all such orders having 
$\Cl(\OO_\Delta) = \Cl(\OO_\Delta)[2]$.
\end{prob}

Classifying the set of discriminants of unit-generated 
real quadratic orders having one class per genus
is analogous to the Gauss problem of classifying all discriminants $\Delta <0$ having one class
per genus, raised in Article 304 of \textit{Disquisitiones Arithmeticae}; see \Cref{sec:5} for details.
We present in Section \ref{sec:5} tables of discriminants of all such quadratic orders having $\Cl(\OO_\Delta) = \Cl(\OO_\Delta)[2]$
for $\Delta < 10^{10}$, which conjecturally give the complete list of all such quadratic orders.

\subsection{Applications}\label{subsec:13}
SICs or SIC-POVMs (symmetric, informationally complete, positive operator-valued measures) are generalized quantum measurements corresponding to arrangements of $d^2$ equiangular complex lines in $\C^d$. There is an empirically observed surprising
connection between SICs in dimension $d$ and the quadratic field $\Q(\sqrt{(d+1)(d-3)})$. 

The connection was observed in work of Appleby, Yadsen-Appleby, and Zauner \cite{AYZ:13} in 2013; see also \cite{AFMY:17,AFMY:20}. 
In particular, SICs were connected directly with the unit-generated quadratic order $\OO_{\Delta}$
with discriminant  $\Delta^{+}_{d-1} = (d-1)^2-4 = (d+1)(d-3)$
in \cite{kopplagarias2}, where it was noted that the number of known SICs in dimension $d$, for $d \leq 90$, equals the sum of the class numbers of the overorders of $\OO_{\Delta_{d-1}^+}$.
Moreover, the fields in which the (appropriately scaled) vectors defining a SIC live 
appear to be certain ray class fields of those orders (as defined in \cite{kopplagarias}). 
The conjectural connection to class field theory
can be made very precise and explicit and is related to the Stark conjectures \cite{AFK:25,koppsic,ABGHM:22,BGM:24}.

The bounds on class numbers in this paper, combined with conjectures on SICs formulated in
\cite{kopplagarias2}, imply bounds on the number of equivalence classes of Weyl--Heisenberg covariant SICs in dimension $d$.

\subsection{Prior work}\label{subsec:12a} 

For background on the theory of real quadratic orders, see the books of 
Halter-Koch \cite[Chapter 5]{Halter-Koch13} and Mollin \cite[Chapters 3 and 5]{Mollin:96}.

Class number problems for 
maximal orders of real quadratic fields have been extensively studied.
The terminology that (the maximal order of) a field $\Q(\sqrt{D})$ is of {\em Richaud--Degert type} was coined by Hasse \cite[p.~52]{Hasse:65} in 1965 and states
\begin{equation}
D= m^2 + r, \quad -m < r \le m, \quad r \divides 4m,
\end{equation}
restricting to the cases that $D$ is squarefree. (Note that $D$ need not be a discriminant here; the associated discriminant will be either $\Delta=D$ or $\Delta=4D$.)
The Richaud--Degert condition can also be phrased as a condition stating that the continued fraction expansion of $\sqrt{D}$ has one of several short parametric forms.
(The continued fractions for $n^2 \pm 4$ are particularly simple; see Remark \ref{rem:CF}.)
The fields are named after the 1866 work of Richaud \cite{Richaud:1866} and 1958 work of Degert \cite{Degert:58}
on fundamental units.

In 1996, Mollin \cite[pp.~77--78]{Mollin:96} introduced the terminology
{\em narrow Richaud--Degert type} 
for the special case of $D=m^2+r$ with $r \in \{\pm 1, \pm 4\}$, where $D$ is required to be squarefree. In 1998 Mollin \cite{Mollin:98} considered $\Delta^{\pm}$
as parametric families of quadratic orders, allowing the non-maximal order case.
He gave a sufficient condition for class number one.

Mollin \cite[pp.~77--78]{Mollin:96} 
also introduced the terminology {\em extended Richaud--Degert type}, abbreviated {\em ERD-type}, for
\begin{equation}
D = m^2 + r, \quad r \divides 4m, \,\, r \in \Z,
\end{equation}
removing Hasse's restriction $-m<r<m$; see also \cite{MollinW:90}.
All fields of ERD-type contain unit-generated orders of small conductor (that is, small index in the maximal order). 
This fact can be deduced from \cite[Theorem 3.2.1]{Mollin:96}, which presents the continued fractions of some $\omega$ with $\OO_K = \Z[\omega]$ in all possible cases in which $K$ is of ERD-type. In this paper, we give the largest unit-generated order
in fields of narrow Richaud--Degert type in \Cref{prop:R--D} and \Cref{table:0}.
In these fields, the largest unit-generated order has conductor either $1$ or $2$.

General criteria for real quadratic orders to have class number one were given recently by
Kawamoto and Kishi  \cite[Theorem 2.6]{KawamotoK:24}, \cite[Theorem 3.5]{KawamotoKT:24}.
Caeiro and Darmon \cite[Main Theorem, Theorem 25, and Theorem 28]{CD:25} present conditional results 
classifying all unit-generated orders of class number one 
modulo their Conjecture 7. Their Conjecture 7 asserts that the image of a real multiplication (RM) point 
under a certain rigid analytic cocycle may be used to define a global point on an elliptic curve defined over a ring class field, and it also asserts the truth of a Gross--Zagier formula in this context. 

\subsection{Contents of paper}\label{subsec:14}

\Cref{sec:unit-generated} discusses 
parametrizations of unit-generated orders and definitions 
the of class groups and class numbers.

\Cref{sec:3} proves asymptotic results on class numbers $h_{\Delta} = \abs{\Cl(\OO_{\Delta})}$ for $\Delta = n \pm 4$
as $n \to \infty$.  It proves effective bounds for class numbers restricted to families 
of such orders that
have bounded associated fundamental discriminants.

\Cref{sec:4} classifies all discriminants  $\Delta= n^2 \pm 4$ of orders having $h_{\Delta}=1$.

\Cref{sec:5} 
proves (ineffectively) the finiteness of 
the set of $\Delta= n^2 \pm 4$ such that $\Cl(\OO_\Delta)$ is $2$-torsion.
It thus proves finiteness of the set of such $\Delta$ with one class per genus.
It gives lists of known unit-generated order discriminants
for which the (wide) class group is $2$-torsion; these lists are conjecturally complete and are known to be complete for discriminants below $10^{10}$. 
It also determines the sublist of those discriminants below $10^{10}$ having one class per genus (those whose narrow class group is $2$-torsion).

\Cref{sec:6aa} makes concluding remarks.

\section{Basic properties of unit-generated orders} \label{sec:unit-generated}

The discriminant of a quadratic order $\OO = \OO_\Delta$ is written (following Halter-Koch \cite{Halter-Koch13}) as
\begin{equation}
\Delta = f^2 \Delta_0,
\end{equation}
where $\Delta_0$ is the unique fundamental discriminant of the same sign dividing $\Delta$, and $f \ge 1$.
For real quadratic fields, $\Delta_0>0$ (and for imaginary quadratic fields, $\Delta_0<0$).
The number $f$ is called the \textit{conductor} and is equal to the index of $\OO$ in the maximal order: $[\OO_{\Delta_0} : \OO] = f$.

If $D_0 \neq 1$ is any squarefree integer, we associate to it 
the {\em fundamental discriminant}
\begin{equation}
\Delta_0 = \begin{cases} 
D_0 & \mbox{if} \quad D_0 \equiv 1 \Mod{4}, \\
4D_0 & \mbox{if} \quad D_0 \equiv 2, 3 \Mod{4}.
\end{cases}
\end{equation}
Following Mollin \cite[p.~4]{Mollin:96}, we  
term 
$D_0$ the {\em fundamental radicand} of $\Delta_0$ (or of $\Q(\sqrt{\Delta_0})$).

\subsection{Parameterizations of unit-generated orders}\label{subsec:21}

We first deal with imaginary quadratic unit-generated orders and then with real quadratic unit-generated orders. 

\begin{prop}\label{prop:21}
There are exactly two imaginary quadratic unit-generated orders, which
are the maximal orders $\OO_K$ of $K =\Q(\sqrt{-1})$, given
by $\Z[\sqrt{-1}]$, and that of $K=\Q(\sqrt{-3})$, given by
$\Z[\frac{1+\sqrt{-3}}{2}]$.
\end{prop}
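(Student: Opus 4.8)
The plan is to reduce the assertion to a description of the unit group $\OO_\Delta^{\times}$ for imaginary quadratic $\OO_\Delta$ (so $\Delta<0$). The key input, which I would invoke from the standard theory, is that every unit $u\in\OO_\Delta^{\times}$ satisfies $\Nm(u)=u\bar u=|u|^{2}=1$, so $u$ is an algebraic integer all of whose conjugates have absolute value $1$, hence a root of unity; thus $\OO_\Delta^{\times}$ is the finite cyclic group of roots of unity in $\OO_\Delta$. Since the cyclotomic field $\Q(\zeta_m)$ has degree $\le 2$ over $\Q$ only for $m\in\{1,2,3,4,6\}$, and the primitive $m$-th root of unity is a ring generator of the maximal order of $\Q(\sqrt{-1})$ (for $m=4$) or of $\Q(\sqrt{-3})$ (for $m\in\{3,6\}$), one obtains $\OO_\Delta^{\times}=\{\pm1\}$ unless $\OO_\Delta$ is the maximal order $\OO_{-4}=\Z[\sqrt{-1}]$, with $\OO_{-4}^{\times}=\langle\sqrt{-1}\rangle$ of order $4$, or the maximal order $\OO_{-3}=\Z\!\left[\frac{1+\sqrt{-3}}{2}\right]$, with $\OO_{-3}^{\times}=\langle\zeta_6\rangle$ of order $6$ where $\zeta_6=\frac{1+\sqrt{-3}}{2}$. (See \cite[Chapter 5]{Halter-Koch13}.)

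With this in hand I would first dispatch the generic case: if $\OO_\Delta^{\times}=\{\pm1\}$, then the $\Z$-submodule of $\OO_\Delta$ additively generated by the units is $\Z\cdot 1=\Z$, a $\Z$-module of rank $1$, whereas $\OO_\Delta$ has rank $2$. Hence such an $\OO_\Delta$ is not unit-generated, and the only candidates left are $\OO_{-4}$ and $\OO_{-3}$.

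Finally I would check directly that these two orders are in fact unit-generated. For $\OO_{-4}$, the units $1$ and $\sqrt{-1}$ lie in $\OO_{-4}^{\times}$ and already $\Z\cdot 1+\Z\cdot\sqrt{-1}=\Z[\sqrt{-1}]=\OO_{-4}$. For $\OO_{-3}$, the units $1$ and $\zeta_6=\frac{1+\sqrt{-3}}{2}$ lie in $\OO_{-3}^{\times}$ and $\Z\cdot 1+\Z\cdot\zeta_6=\Z\!\left[\frac{1+\sqrt{-3}}{2}\right]=\OO_{-3}$. In both cases the $\Z$-span of the units is the whole order, which completes the argument.

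There is no serious obstacle here: once the torsion units of imaginary quadratic orders are pinned down, everything is a rank count together with two one-line verifications. The only step meriting a little care is the passage from the maximal to the non-maximal orders of $\Q(\sqrt{-1})$ and $\Q(\sqrt{-3})$, where I would use that the ``extra'' root of unity is not merely a unit of the maximal order but a $\Z$-module (indeed ring) generator of it, so it cannot lie in any proper suborder; hence every non-maximal order in these two fields also has unit group $\{\pm1\}$ and falls under the generic case.
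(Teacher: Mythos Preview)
Your proof is correct and follows essentially the same approach as the paper: both argue that imaginary quadratic units are roots of unity, that orders with $\OO^{\times}=\{\pm1\}$ cannot be unit-generated since their units span only $\Z$, and that any ``extra'' root of unity already generates the maximal order of $\Q(\sqrt{-1})$ or $\Q(\sqrt{-3})$, ruling out the non-maximal orders in those fields. Your write-up is somewhat more detailed (explicitly verifying that $\OO_{-4}$ and $\OO_{-3}$ are unit-generated, and spelling out why the extra root of unity cannot lie in a proper suborder), but the underlying argument is the same.
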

\begin{proof}
The unit-generated orders are $\OO= \Z[\OO^{\times}]$. If $\OO$ contains only the units $\pm 1$,
then $\Z[\OO^{\times}] = \Z$ is not a quadratic order. 
The only quadratic orders having
more than the units $\pm 1$ must be orders containing extra roots of unity, which
are therefore orders in either $K = \Q(\sqrt{-1})$ or $K= \Q(\sqrt{-3})$.
Adjoining an extra root of unity in these cases always gives the maximal order. 
\end{proof}

There are two natural ways to list all the unit-generated real quadratic orders. The first way uses $(\Delta_0, j)$
with $\Delta_0$ being the fundamental discriminant
of the associated quadratic field together with a power $j \ge 1$  of the fundamental unit $\varepsilon_{\Delta_0}$ of $\OO_K$,
which is the smallest unit $\varepsilon >1$ in $\OO_K$. It parameterizes all the unit generated orders in a fixed quadratic fields $K$,
one at a time.

\begin{prop}\label{prop:22}
Let $K$ be a real quadratic field of fundamental discriminant $\Delta_0$.
\begin{itemize}
\item[(1)]
All unit-generated orders $\OO$ of $K$ are generated as $\OO= \Z[\e] = \Z + \e\Z$, where $\e$ is the smallest unit $\e >1$ contained in $\OO$. 
\item[(2)] 
If $\e_{\!\Delta_0} > 1$ is the fundamental unit of $\OO_K$, then for $j \ge 1$, the orders
\begin{equation}
\OO(\Delta_0, j) := \Z[\e_{\!\Delta_0}^j] = \Z + \e_{\!\Delta_0}^j\Z
\end{equation}
comprise  the full set of unit-generated orders in $K$. 
The orders $\OO(\Delta_0, j)$ are all distinct, with the single exception that $\OO(5,1)= \OO(5,2)$, where $\OO(5, 1) = \Z[\frac{1+\sqrt{5}}{2}]$ and $\OO(5, 2) = \Z[\frac{3+\sqrt{5}}{2}]$.
\item[(3)]
If $j \divides k$, then $\OO(\Delta_0, k) \subseteq \OO(\Delta_0, j)$.
Conversely, if $\OO(\Delta_0, k) \subseteq \OO(\Delta_0, j)$, then $j \divides k$, with the single exception that $\OO(5, 1) \subseteq \OO(5,2).$
\end{itemize}
\end{prop}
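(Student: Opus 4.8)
The plan is to reduce everything to the structure of $\OO^{\times}$ together with the conductor of an order of the form $\Z[\eta]$. First I would record, from Dirichlet's unit theorem for orders, that for a real quadratic order $\OO$ one has $\OO^{\times} = \{\pm 1\}\times\langle \e_{\OO}\rangle$, where $\e_{\OO}>1$ is the smallest unit exceeding $1$. Since $\e_{\OO}$ satisfies a monic quadratic over $\Z$ whose constant term is $\Nm(\e_{\OO})=\pm1$, its inverse --- and hence every power of it, positive or negative --- lies in $\Z + \Z\e_{\OO}$; therefore the $\Z$-span of $\OO^{\times}$ equals $\Z[\e_{\OO}] = \Z + \Z\e_{\OO}$. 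Thus $\OO$ is unit-generated exactly when $\OO = \Z[\e_{\OO}]$, which is the assertion of part (1).

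For part (2): if $\OO$ is unit-generated then $\e_{\OO}\in\OO_K^{\times}$, and since $\langle\e_{\OO}\rangle \le \langle\e_{\Delta_0}\rangle$ with both generators $>1$, we get $\e_{\OO}=\e_{\Delta_0}^{\,j}$ for a unique $j\ge1$, so $\OO = \OO(\Delta_0,j)$. Conversely each $\OO(\Delta_0,j) = \Z + \Z\e_{\Delta_0}^{\,j}$ is a rank-two subring of $\OO_K$, hence an order, and $\e_{\Delta_0}^{\,j}$ is one of its units; so the $\Z$-span of its unit group both contains $\Z + \Z\e_{\Delta_0}^{\,j} = \OO(\Delta_0,j)$ and is contained in it, i.e.\ $\OO(\Delta_0,j)$ is unit-generated. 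This identifies the $\OO(\Delta_0,j)$, $j\ge1$, as precisely the unit-generated orders of $K$.

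The distinctness claim in (2) and the converse in (3) I would handle through the conductor. Writing $\e_{\Delta_0}^{\,j} = \tfrac12(a_j + b_j\sqrt{\Delta_0})$ (with the obvious change when $\Delta_0\equiv 0 \Mod{4}$), one checks that the conductor of $\OO(\Delta_0,j)$ equals $\abs{b_j}$, and that $\OO(\Delta_0,k)\subseteq\OO(\Delta_0,j)$ if and only if $\abs{b_j}$ divides $\abs{b_k}$. From $b_j\sqrt{\Delta_0} = \e_{\Delta_0}^{\,j} - \overline{\e_{\Delta_0}}^{\,j}$ and $\abs{\overline{\e_{\Delta_0}}} = \e_{\Delta_0}^{-1} < 1$, I would show that $\abs{b_j}$ is strictly increasing for $j\ge1$ with the sole possible exception of the equality $\abs{b_1}=\abs{b_2}$; in the norm $-1$ case this reduces to $\e_{\Delta_0} - \e_{\Delta_0}^{-1}\ge1$, with equality exactly when $\e_{\Delta_0} = \tfrac12(1+\sqrt5)$, and the golden ratio is the smallest fundamental unit over all real quadratic fields and is attained only for $\Q(\sqrt5)$ (writing $\e_{\Delta_0} = \tfrac12(a+b\sqrt{\Delta_0})$ with $a,b\ge1$ forces $\e_{\Delta_0}\ge\tfrac12(1+\sqrt{\Delta_0})$). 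Strict monotonicity for $j\ge2$ then shows the $\OO(\Delta_0,j)$ are all distinct except for $\OO(5,1)=\OO(5,2)$. For the forward part of (3), $j\divides k$ makes $\e_{\Delta_0}^{\,k}$ a power of $\e_{\Delta_0}^{\,j}$, so $\OO(\Delta_0,k)\subseteq\OO(\Delta_0,j)$. For the converse, if $\OO(\Delta_0,k)\subseteq\OO(\Delta_0,j)$, write $k = qj + r$ with $0\le r<j$; then $\e_{\Delta_0}^{\,r} = \e_{\Delta_0}^{\,k}\,(\e_{\Delta_0}^{\,j})^{-q}\in\OO(\Delta_0,j)$, so if $r\ge1$ the fundamental unit of $\OO(\Delta_0,j)$ is $\e_{\Delta_0}^{\,r'}$ for some $1\le r'\le r<j$, forcing $\OO(\Delta_0,j) = \Z[\e_{\Delta_0}^{\,r'}] = \OO(\Delta_0,r')$; by the distinctness just proved this cannot happen unless $\{r',j\} = \{1,2\}$ and $\Delta_0 = 5$, the stated exception. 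Otherwise $r=0$ and $j\divides k$.

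The step I expect to be the main obstacle is the monotonicity of $\abs{b_j}$: one must isolate precisely the single borderline equality $\abs{b_1}=\abs{b_2}$ and trace it to $\Delta_0 = 5$, which rests on the sharp lower bound $\e_{\Delta_0}\ge\tfrac12(1+\sqrt5)$ for fundamental units of real quadratic fields. The remaining ingredients are routine manipulations of the unit group and the conductor formula.
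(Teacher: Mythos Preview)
Your argument is correct and, for parts (1) and (2), proceeds along the same lines as the paper: reduce to $\OO^{\times}=\{\pm1\}\times\langle\e_{\OO}\rangle$, close $\Z+\Z\e_{\OO}$ under powers and inverses via the minimal polynomial, and identify the orders via the conductor $f_j=\abs{b_j}$. Where the paper simply cites Halter-Koch for the strict monotonicity of $(f_j)_{j\ge1}$ when $\Delta_0>5$ and invokes Fibonacci numbers for $\Delta_0=5$, you sketch the monotonicity directly from $b_j\sqrt{\Delta_0}=\e_{\Delta_0}^{\,j}-\overline{\e_{\Delta_0}}^{\,j}$ and correctly isolate the borderline $\abs{b_1}=\abs{b_2}$ as the equation $\e_{\Delta_0}-\e_{\Delta_0}^{-1}=1$, i.e.\ $\e_{\Delta_0}=\tfrac12(1+\sqrt5)$.

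The one place your route genuinely diverges is part (3). The paper's proof is a one-line appeal to the formula $\OO(\Delta_0,j)=\OO_{f_j^2\Delta_0}$, which implicitly relies on the divisibility property $f_j\mid f_k\Leftrightarrow j\mid k$ of the Lucas-type sequence $(f_j)$. Your Euclidean argument---writing $k=qj+r$ and using $\e_{\Delta_0}^{\,r}\in\OO(\Delta_0,j)$ to force a collision $\OO(\Delta_0,j)=\OO(\Delta_0,r')$ with $r'<j$---bypasses that sequence fact entirely and reduces directly to the distinctness already established in (2). This is more self-contained; it also makes transparent that every exception has $\Delta_0=5$ and $j=2$, arising from the single coincidence $\OO(5,1)=\OO(5,2)$.
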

\begin{proof}
(1) Suppose $\OO = \Z[\OO^{\times}]$. Let $\e_{\!\Delta_0}$ be the fundamental unit of the maximal order $\OO_K$ of the field $K$ generated by $\OO$.
Let $\e := \e_{\!\Delta_0}^j > 1$ be the smallest positive
unit belonging to $\OO$. Then we claim $\OO = \Z+\e\Z$. We certainly have $\Z+\e\Z \subseteq \OO$.
The unit satisfies the minimal polynomial $\e^2 - t\e \pm 1 = 0$ for $t = \Tr_{K/\Q}(\e) \in \Z$, so $\e^{-1} = \pm t \mp \e \in \Z+\e\Z$. Also, $\e^{k+1} = t\e^k \mp \e^{k-1}$, so a straightforward induction argument shows that $\e^k \in \Z+\e\Z$.
But $\OO^{\times} = \{\pm \e^k: k \in \Z\}$, for any
additional unit $\eta$ in $\OO^{\times}$ would by multiplication by a suitable unit $\pm \e^k$ yield a unit $\eta' = \e_{\!\Delta}^i>1$ having $1\le i < j$, contradicting the minimality of $\e$.
We conclude $\OO \subseteq \Z+\e\Z$, so $\OO=\Z[\e]$. 

(2) All units $\e>1$ of $K$ are given as $\e_{\!\Delta_0}^j$ for some $j \ge 1$; see \cite[Theorem 5.2.1]{Halter-Koch13}.
It follows that $\OO(\Delta_0, j)$ is an exhaustive list, which might however contain duplicate orders. 

To determine the duplicates, we use the fact that for $\Delta_0>5$, the powers of the fundamental unit  $\e_{\!\Delta_0}^j= \frac{t_j + f_j \sqrt{\Delta_0}}{2}$ with $t_j \equiv f_j \Mod{2}$ have sequences $(t_j)_{j \ge 0}$ and $(f_j)_{j \ge 0}$ strictly monotonic increasing \cite[Theorem 5.2.5(2)]{Halter-Koch13}. 
Thus, by (1), $\OO(\Delta_0,j) = \OO_{f_j^2\Delta_0}$, and these are all distinct.
For the exceptional case $\Delta=5$, $\e_{5}= \frac{1+ \sqrt{5}}{2}$ has $\e_5^j= \frac{L_j+ F_j \sqrt{5}}{2}$ given by Fibonacci numbers $F_j$ and Lucas numbers $L_j$, and we have $f_j= F_j$, which are all distinct except for $F_1=F_2=1$. 

(3) Both assertions follow from (2) and the formula $\OO(\Delta_0,j) = \OO_{f_j^2\Delta_0}$.
\end{proof} 

The second parametrization of unit-generated orders
gives a direct parameterization of discriminants $\Delta = f^2 \Delta_0$ of the orders.
It gives the orders in two lists,  
ordered by increasing size of the generating units in the lists,
keeping the norm of the unit fixed, while mixing all the different fields $K$ in the ordering. 
The set of all real quadratic unit-generated orders are enumerated by having 
The discriminants consist of the  
two families $\Delta = \Delta^+_n = n^2-4$ for $n \ge 3$ and $\Delta = \Delta^-_n = n^2+4$, for $n \ge 1$ 
as described in \Cref{sec:intro}. 

\begin{prop}\label{prop:23}
Let $K$ be a real quadratic field of 
fundamental discriminant $\Delta_0$, and let $\OO= \Z[\e]$ be a unit-generated order of $K$, with $\e>1$ its minimal positive unit.
Write $\e= \frac{n+f\sqrt{\Delta_0}}{2}$, with $n, f \ge 1$ and $n \equiv f \Mod{2}$. Then $\OO$ has discriminant
\begin{equation}\label{eqn:disc}
\Delta(\OO) = f^2\Delta_0= n^2 \mp 4,
\end{equation}
where $\Nm_{K/\Q} (\e) = \pm 1$.
The conductor $[\OO_K:\OO]=f$.

Conversely, if $\Delta = n^2 - 4$ for $n \geq 3$ or $\Delta = n^2 + 4$ for $n \geq 1$, then $\OO_\Delta$ is a real quadratic unit-generated order.
\end{prop}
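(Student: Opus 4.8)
The plan is to reduce everything to the elementary identity $\Delta(\Z[\alpha]) = (\alpha - \overline{\alpha})^2 = (\Tr\,\alpha)^2 - 4\,\Nm\,\alpha$, valid for any quadratic algebraic integer $\alpha$ with conjugate $\overline{\alpha}$, together with the fact that a unit has norm $\pm 1$. For the forward implication, \Cref{prop:22}(1) gives $\OO = \Z + \e\Z$, so $\{1,\e\}$ is a $\Z$-basis of $\OO$ and hence
\[
\Delta(\OO) = (\e - \overline{\e})^2 = (\Tr_{K/\Q}\e)^2 - 4\,\Nm_{K/\Q}(\e).
\]
Set $n := \Tr_{K/\Q}(\e) \in \Z$; since $\e > 1$ and $\abs{\overline{\e}} = \abs{\Nm_{K/\Q}(\e)}/\e < 1$, one has $n = \e + \overline{\e} > 0$, so $n \ge 1$. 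As $\e$ is a unit, $\Nm_{K/\Q}(\e) = \pm 1$, so the display becomes $\Delta(\OO) = n^2 \mp 4$, the sign being opposite to that of $\Nm_{K/\Q}(\e)$. On the other hand, writing $\e = \tfrac12\bigl(n + f\sqrt{\Delta_0}\bigr)$ we have $\e - \overline{\e} = f\sqrt{\Delta_0}$, hence $\Delta(\OO) = f^2\Delta_0$ as well. Equating the two expressions and using the defining relation $\Delta(\OO) = [\OO_K:\OO]^2\,\Delta_0$ gives $f^2 = [\OO_K:\OO]^2$, so $f = [\OO_K:\OO] \ge 1$ is a positive integer, the conductor, and $\Delta(\OO) = f^2\Delta_0 = n^2 \mp 4$; the congruence $n \equiv f \pmod{2}$ is then read off from $\e \in \OO_K = \Z[\tfrac12(\Delta_0 + \sqrt{\Delta_0})]$.

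For the converse, given $\Delta = n^2 - 4$ with $n \ge 3$ (respectively $\Delta = n^2 + 4$ with $n \ge 1$), put $\e := \tfrac12\bigl(n + \sqrt{\Delta}\bigr)$. Then $\e$ is a root of the monic integer polynomial $x^2 - nx + 1$ (respectively $x^2 - nx - 1$), hence an algebraic integer, and it is a unit since $\Nm(\e)$ equals the constant term $+1$ (respectively $-1$). The one point needing verification is that $\Delta$ is not a perfect square: for $n \ge 3$ one has $(n-1)^2 < n^2 - 4 < n^2$, for $n \ge 2$ one has $n^2 < n^2 + 4 < (n+1)^2$, and $1^2 + 4 = 5$ is manifestly not a square. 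So $K := \Q(\sqrt{\Delta})$ is a genuine real quadratic field and $\e \in \OO_K^{\times} \setminus \Q$; thus $\OO := \Z[\e] = \Z + \e\Z$ is an order of $K$, and by the computation above its discriminant is $(\e - \overline{\e})^2 = \Delta$. Since a quadratic order is determined by its discriminant \cite[Theorem 5.1.7]{Halter-Koch13}, $\Z[\e] = \OO_\Delta$. Finally, from $\e \in \OO_\Delta^{\times}$ we get the chain $\Z[\e] \subseteq \Z[\OO_\Delta^{\times}] \subseteq \OO_\Delta = \Z[\e]$, whence $\OO_\Delta = \Z[\OO_\Delta^{\times}]$ is unit-generated.

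I do not expect a genuine obstacle: the statement is bookkeeping around the discriminant-of-a-generator identity. The two places demanding a little care are, first, matching the formulas $\Delta(\OO) = n^2 \mp 4$ and $\Delta(\OO) = f^2\Delta_0$ so as to identify $f$ with the conductor (rather than leaving it a bare rational scalar), and, second, the non-square check in the converse, without which $\Q(\sqrt{n^2 \mp 4})$ would fail to be a quadratic field in the relevant parameter ranges.
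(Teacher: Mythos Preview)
Your proof is correct and follows essentially the same route as the paper: both invoke \Cref{prop:22}(1) to get $\OO = \Z + \e\Z$, then compute the discriminant in two ways---once via the norm equation and once via the explicit form $\e = \tfrac12(n+f\sqrt{\Delta_0})$---to obtain $\Delta(\OO) = n^2 \mp 4 = f^2\Delta_0$. Your version is in fact slightly more careful than the paper's in two places: you verify explicitly that $n^2 \mp 4$ is not a perfect square in the stated ranges (so that $\OO_\Delta$ is genuinely a quadratic order), and you spell out the chain $\Z[\e] \subseteq \Z[\OO_\Delta^\times] \subseteq \OO_\Delta = \Z[\e]$ in the converse, whereas the paper simply asserts that $\e$ is a unit and $\OO_\Delta = \Z[\e]$.
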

\begin{proof}
Write $\Gal(K/\Q)=\{1,\sigma\}$.
Note that $n \ge 1$, because $\e+  \sigma(\e) =n >0$, since the conjugate $\sigma(\e) = \pm \e^{-1} > -1.$
By \Cref{prop:22}, $\OO = \Z + \e\Z$. In other words, $\OO = \Z + \frac{n+f\sqrt{\Delta_0}}{2}\Z = \Z + \frac{n+\sqrt{f^2\Delta_0}}{2}\Z = \OO_{f^2\Delta_0}$, so $\Delta(\OO) = f^2\Delta_0$. Additionally,
\begin{equation}
\pm 1 = \Nm_{K/\Q}(\e) = {\left(\frac{n}{2}\right)\!}^2 - {\left(\frac{f}{2}\right)\!}^2\Delta_0 = \frac{n^2-f^2\Delta_0}{4}, 
\end{equation}
so $f^2\Delta = n^2 \mp 4$. The maximal order $\OO_K= \Z+\frac{\Delta_0+\sqrt{\Delta_0}}{2}\Z$, and by inspection the index $[\OO_K:\OO]=f$.

Conversely, set $\Delta = n^2 \mp 4$. Take $\e = \frac{n + \sqrt{n^2 \mp 4}}{2}$. Then $\e$ is a unit, and $\OO_K = \Z[\e]$.
\end{proof} 

\begin{rmk}\label{rem:CF}
There are parameterizations  of the  continued fraction expansions of the 
generating units $\e_n^{\pm}$  of 
the two families of real quadratic orders $\Delta_n^{\pm}$
using the parameter $n$. 
The ordinary continued fractions are
\begin{align}
\e_{n}^{+} &= \frac{1}{2}(n+\sqrt{n^2-4}) 
            = [n-1, \overline{1, n-2}]_+
            = n-1 + \dfrac{1}{1+\dfrac{1}{n-2 + \dfrac{1}{1+\dfrac{1}{n-2+\dfrac{1}{\ddots}}}}},
            \quad n \ge 3. \\
\e_{n}^{-} &= \frac{1}{2}(n+\sqrt{n^2+4}) 
            = [\overline{n}]_+
            = n + \dfrac{1}{n+\dfrac{1}{n+\dfrac{1}{\ddots}}}, \quad n \ge 1.
\end{align}
The minus (Hirzeburch--Jung) continued fractions are
\begin{align}
\e_{n}^{+} &= \frac{1}{2}(n+\sqrt{n^2-4}) 
            = [\ol{n}]_-
            = n - \dfrac{1}{n-\dfrac{1}{n-\dfrac{1}{\ddots}}}, \quad n \ge 3.\\
\e_{n}^{-} &= \frac{1}{2}(n+\sqrt{n^2+4}) 
            = [n+1,\ol{\{2\}^{n-1},n+2}]_{-}, \quad  n \ge 1.
\end{align}
\end{rmk}

\subsection{Relation between narrow Richaud--Degert fields and unit-generated orders}

We establish a result giving the unit-generated order of smallest conductor $f$ in a real quadratic field $K$ of narrow Richaud--Degert type.

\bgroup
\def\arraystretch{1.1}
\begin{table}[h!]
\begin{tabular}{| c | c | c |}
\hline
fundamental radicand & discriminant $\Delta$ of unit-generated order & relation \\
\hhline{|=|=|=|}
$D_0 = m^2 - 4$, $m$ odd & $\Delta = \Delta_0 = n^2 - 4$, $n$ odd & $n = m$ \\
\hline
$D_0 = m^2 - 1$, $m$ even & $\Delta = \Delta_0 = n^2 - 4$, $n \equiv 0 \Mod{4}$ & $n = 2m$ \\
\hline
$D_0 = m^2 + 1$, $m$ even & $\Delta = 4\Delta_0 = n^2 + 4$, $n \equiv 0 \Mod{4}$ & $n = 2m$\\
$D_0 = m^2 + 1$, $m$ odd & $\Delta = \Delta_0 = n^2 + 4$, $n \equiv 2 \Mod{4}$ & $n = 2m$ \\
\hline
$D_0 = m^2 + 4$, $m$ odd & $\Delta = \Delta_0 = n^2 + 4$, $n$ odd & $n = m$ \\
\hline
\end{tabular}
\medskip
\caption{Relationship between narrow Richaud--Degert real quadratic fields and unit-generated orders. Every narrow 
Richaud--Degert field occurs in exactly one row of this table. All maximal real quadratic unit-generated orders, 
as well as some of conductor $f=2$, occur in the table.}
\label{table:0}
\end{table}
\egroup

\begin{prop}\label{prop:R--D}
If $K$ is a real quadratic field of narrow Richaud--Degert type having fundamental discriminant $\Delta_0$ and fundamental radicand $D_0$, then either:
\begin{itemize}
\item[(1)] $\OO_{\Delta_0}$ is a unit-generated order (with conductor $f=1$), or
\item[(2)] $\OO_{4\Delta_0}$ is a unit-generated order (with conductor $f=2$).
\end{itemize}
Case (2) occurs if and only if $D_0 = m^2+1$ and $2 \divides m$. 
Conversely, if $K$ is a real quadratic field whose maximal order is unit-generated, 
then $K$ is of narrow Richaud--Degert type.
\end{prop}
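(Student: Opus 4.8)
The plan is to analyze the narrow Richaud--Degert condition $D_0 = m^2 + r$ with $r \in \{\pm 1, \pm 4\}$ and $D_0$ squarefree, splitting into cases according to $r$ and the parity of $m$, and in each case exhibit an explicit unit of small norm lying in the maximal order $\OO_{\Delta_0}$ or in the order $\OO_{4\Delta_0}$. The key observation is that $m^2 + r$ being a radicand means $\sqrt{D_0}$ satisfies $(\sqrt{D_0})^2 - m^2 = r$, which immediately produces a unit: when $r = \pm 1$ the element $m + \sqrt{D_0}$ has norm $m^2 - D_0 = -r = \mp 1$, and when $r = \pm 4$ the element $\tfrac{1}{2}(2m + 2\sqrt{D_0})$ suggests looking at $m + \sqrt{D_0}$ with norm $\mp 4$, which after possibly dividing by $2$ gives $\tfrac{m + \sqrt{D_0}}{2}$ with norm $\mp 1$. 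I would then check, case by case, whether this unit generates the maximal order (conductor $1$) or only an index-$2$ suborder (conductor $2$), using the congruence class of $D_0$ mod $4$ to identify $\Delta_0$ via the formula for fundamental discriminants recalled earlier, and \Cref{prop:23} to read off the discriminant of $\Z[\e]$ as $n^2 \mp 4$.

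Concretely: if $r = \pm 1$, then $D_0 \equiv m^2 \pm 1 \pmod 4$; one checks that the unit $\e = m + \sqrt{D_0}$ (of norm $\mp 1$) has $\Z[\e] = \OO_{\Delta_0}$ whenever $\Delta_0 = 4D_0$, i.e. $D_0 \equiv 2,3 \pmod 4$, and one must separately handle $D_0 \equiv 1 \pmod 4$ (forcing $m$ even and $r = 1$, i.e. $D_0 = m^2+1$ with $m$ even), where $\e = m + \sqrt{D_0} \in \OO_{\Delta_0}$ but generates only $\Z + (m + \sqrt{D_0})\Z$; comparing this to $\OO_K = \Z + \tfrac{1+\sqrt{D_0}}{2}\Z$ shows the index is $2$, giving case (2). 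If $r = \pm 4$, then $D_0 = m^2 \pm 4$; since $D_0$ is squarefree it is odd (else $4 \mid D_0$), so $m$ is odd, $D_0 \equiv 1 \pmod 4$, $\Delta_0 = D_0$, and $\e = \tfrac{m + \sqrt{D_0}}{2}$ is a unit of norm $\mp 1$ in $\OO_K$ with $\Z[\e] = \OO_{\Delta_0}$: case (1) with $f=1$. Assembling these cases shows case (2) arises exactly when $D_0 = m^2 + 1$ with $m$ even, as claimed; a small check rules out the boundary values of $m$ (e.g. $m \le 1$) consistently with the parameter ranges $n \ge 3$, $n \ge 1$ in the two families.

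For the converse, suppose $\OO_K = \OO_{\Delta_0}$ is unit-generated. Then by \Cref{prop:23} we have $\Delta_0 = n^2 - 4$ or $\Delta_0 = n^2 + 4$ for suitable $n$, and $\Delta_0$ is a fundamental discriminant. Writing $\Delta_0 = D_0$ or $4D_0$ with $D_0$ the squarefree fundamental radicand, I would translate $n^2 \mp 4 \in \{D_0, 4D_0\}$ into an expression $D_0 = m^2 + r$ with $r \in \{\pm 1, \pm 4\}$: if $\Delta_0 = D_0$ then directly $D_0 = n^2 \mp 4$ with $r = \mp 4$; if $\Delta_0 = 4D_0$ then $D_0 = \tfrac{n^2 \mp 4}{4} = (\tfrac{n}{2})^2 \mp 1$, so $n$ is even, $m = n/2$, and $r = \mp 1$. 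Either way $D_0$ is of narrow Richaud--Degert form, so $K$ is of narrow Richaud--Degert type.

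The main obstacle I anticipate is bookkeeping the parity and mod-$4$ congruences carefully so that the squarefreeness hypothesis on $D_0$ is correctly used to exclude degenerate subcases (e.g. ensuring $4D_0$ really is the fundamental discriminant when $D_0 \equiv 2,3 \pmod 4$, and conversely that $D_0 = m^2+4$ with $m$ even would force $4 \mid D_0$, contradicting squarefreeness) — in other words, the combinatorics of which $(r, m \bmod 2)$ pairs actually occur, rather than any deep argument. The identification of the index $[\OO_K : \Z[\e]]$ in the one nontrivial case is a one-line determinant computation.
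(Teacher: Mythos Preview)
Your proposal is correct and follows essentially the same approach as the paper: a case split on $r \in \{\pm 1, \pm 4\}$ together with the parity of $m$, using squarefreeness of $D_0$ to rule out impossible parities and then reading off the discriminant of $\Z[\e]$ via \Cref{prop:23}. The paper compresses exactly this five-case analysis into \Cref{table:0} and a one-line proof (``check the table''), whereas you write out the explicit units and the mod-$4$ bookkeeping; the content is the same.
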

\begin{proof}
The proposition
follows by checking the five cases illustrated in \Cref{table:0}. Inspection of the conditions $D_0 \equiv 1,2,3 \Mod{4}$ and $\Delta_0 \equiv 0,1 \Mod{4}$ shows that the rows of this table exhaust both all $D_0$ of narrow Richaud--Degert type and all $\Delta_0$ of maximal unit-generated quadratic orders.
\end{proof}

\subsection{Class groups of orders}\label{subsec:23}

We define class groups of orders in terms of integral and fractional ideals. 
We follow \cite[Chapter 5]{Halter-Koch13}, especially Section 5.4 and Theorem 5.4.2.
A nonzero integral ideal $\aa$ of the order $\OO_{\Delta}$ has a representation as
a $\Z$-module as $\aa = ea\Z + e\frac{b+ \sqrt{\Delta}}{2}$ with $\Delta= b^2-4ac$,
for unique integers $a, e \ge 1$ and integers $b,c$.
It is {\em $\OO_{\Delta}$-primitive} if $e=1$ and it is {\em $\OO_{\Delta}$-invertible} if and only if
$g := \gcd(a, b, c)$ has $g=1$. It is {\em $\OO_{\Delta}$-regular} if and only if $e=g=1$.
An invertible fractional ideal is a $\Z$-module of the form $r \aa$ with $r \in K^{\times}$,
where $\aa$ is an $\OO_{\Delta}$ invertible integral ideal; it suffices to take $r \in \Q^\times$ to give all invertible fractional ideals.

The class group of an order, defined in terms of invertible fractional ideals, generalizes the usual notion of the class group of (the maximal order of) a number field.
\begin{defn}
The (wide) \textit{class group} (also called the \textit{Picard group} or \textit{ring class group}) of an order $\OO$ in a number field $K$ is
\begin{equation}
\Cl(\OO) := \frac{\{\mbox{invertible fractional ideals of } \OO\}}{\{\mbox{principal fractional ideals $\alpha\OO$, where $\alpha \in K^\times$}\}}.
\end{equation}
\end{defn}
The {\em (wide) class number} $h_{\Delta}= \abs{\Cl(\OO_{\Delta})}$.

\begin{defn}\label{defn:27}
In the real quadratic order case, the \emph{narrow} class group $\Cl^{+}(\OO)$ is the quotient group of the set of invertible fractional ideals by the subgroup of all principal ideals having a totally positive generator. That is,
\begin{equation}
\Cl^{+}(\OO) := \frac{\{\mbox{invertible fractional ideals of } \OO\}}{\{\mbox{principal fractional ideals $\alpha\OO$, where $\alpha \in K^\times$ and $\alpha$ is totally positive}\}}.
\end{equation}
(Note that one can replace $\alpha$ by $-\alpha$ without changing the ideal $\alpha\OO$, so the condition ``$\alpha$ is totally positive'' can be replaced by either the condition ``$\alpha$ is totally negative'' or the condition ``$\alpha$ has positive norm.'')
\end{defn}

We have $\frac{\abs{\Cl^{+}(\OO_{\Delta})}}{\abs{\Cl(\OO_{\Delta})}} = 1$ or $2$, 
the former case occurring 
when there is a unit of norm $-1$ in the order $\OO_\Delta$. 
The {\em narrow class number} $h^{+}_{\Delta}= \abs{\Cl^{+}(\OO_{\Delta})}$.
 
Non-maximal orders always contain non-invertible integral ideals and noninvertible fractional ideals.
If one instead considers all nonzero fractional ideals, quotienting by nonzero principal ideals (which are always invertible), 
one obtains a monoid (that is, a semigroup with identity), called  the  (wide) {\em class monoid} $\Clm(\OO)$. 
The class monoid is a group when $\OO = \OO_K$ and is not a group otherwise.
We consider class monoids for unit-generated orders in a sequel \cite{kopplagarias24b}. 

\section{Growth of the invertible class group of unit-generated real quadratic orders}\label{sec:3}

We give an asymptotic formula for the size of the class group of $\OO_{\Delta}$ in the families $\Delta = \Delta^\pm_n = n^2 \mp 4$.
We deduce the finiteness of the set of unit-generated quadratic orders having one class per genus.

\subsection{Growth of class numbers for arbitrary orders of quadratic fields}\label{subsec3:0}

In 1935 Siegel \cite{siegel:1935}
proved a result on growth of class groups for maximal orders of  imaginary quadratic fields
and a corresponding result on growth of class number times regulator for real quadratic fields. 

\begin{thm}[Siegel's theorem for quadratic fields]\label{thm:siegel}
Let $\Delta$ run over discriminants of maximal orders of quadratic number fields.
\begin{itemize}
\item[(1)]
For $\Delta < 0$, 
\begin{equation}
\lim_{\Delta \to - \infty} \frac{\log h_\Delta}{\log \sqrt{\abs{\Delta}}} = 1
\end{equation}
\item[(2)]
For $\Delta > 0$, 
\begin{equation}
\lim_{\Delta \to  \infty} \frac{\log h_\Delta \log \e_{\Delta}}{\log \sqrt{\Delta}} = 1.
\end{equation}
\end{itemize}
\end{thm}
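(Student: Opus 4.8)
The plan is to deduce \Cref{thm:siegel} from Dirichlet's analytic class number formula, which converts both parts into a single statement about the size of the value $L(1,\chi_\Delta)$ of the Dirichlet $L$-function of the Kronecker symbol $\chi_\Delta = \left(\frac{\Delta}{\cdot}\right)$. Because $\Delta$ ranges over discriminants of \emph{maximal} orders, it is a fundamental discriminant and $\chi_\Delta$ is a primitive real character modulo $\abs{\Delta}$, so that $\zeta(s)L(s,\chi_\Delta) = \zeta_K(s)$ is the Dedekind zeta function of $K = \Q(\sqrt{\Delta})$ and Dirichlet's formula takes the clean shape
\begin{equation}
h_\Delta = \frac{w_\Delta\sqrt{\abs{\Delta}}}{2\pi}\,L(1,\chi_\Delta) \quad (\Delta < 0,\ w_\Delta \in \{2,4,6\}), \qquad h_\Delta\log\e_\Delta = \frac{\sqrt{\Delta}}{2}\,L(1,\chi_\Delta) \quad (\Delta > 0).
\end{equation}
Taking logarithms, the quantity whose growth is in question---$\log h_\Delta$ when $\Delta < 0$, and $\log\!\left(h_\Delta\log\e_\Delta\right)$ when $\Delta > 0$---equals $\log\sqrt{\abs{\Delta}} + \log L(1,\chi_\Delta) + O(1)$, the $O(1)$ absorbing $\log(w_\Delta/2\pi)$ or $-\log 2$. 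Dividing through by $\log\sqrt{\abs{\Delta}} = \frac{1}{2}\log\abs{\Delta}$, it therefore suffices to prove
\begin{equation}
\log L(1,\chi_\Delta) = o\bigl(\log\abs{\Delta}\bigr),
\end{equation}
equivalently the two-sided bound $\abs{\Delta}^{-\epsilon} \ll_\epsilon L(1,\chi_\Delta) \ll_\epsilon \abs{\Delta}^{\epsilon}$ for every fixed $\epsilon > 0$.

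The upper bound is elementary and effective. We have $L(1,\chi_\Delta) > 0$, positive by the class number formula since $h_\Delta \ge 1$; and partial summation applied to $\sum_n \chi_\Delta(n)n^{-1}$, using only the trivial estimate $\left|\sum_{n \le t}\chi_\Delta(n)\right| \le \abs{\Delta}$ for the character sum (the P\'olya--Vinogradov inequality is not needed), yields
\begin{equation}
0 < L(1,\chi_\Delta) = \sum_{n \le \abs{\Delta}}\frac{\chi_\Delta(n)}{n} + O(1) \ll \log\abs{\Delta},
\end{equation}
so that $\log L(1,\chi_\Delta) \le \log\log\abs{\Delta} + O(1)$. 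Combined with the reduction of the first paragraph, this already shows that the $\limsup$ of each of the two ratios in \Cref{thm:siegel} is at most $1$.

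The matching lower bound $L(1,\chi_\Delta) \gg_\epsilon \abs{\Delta}^{-\epsilon}$ is \emph{Siegel's theorem}, and supplying it is the sole genuine difficulty; the implied constant is ineffective. I would argue via the classical exceptional-zero dichotomy. Fix $\epsilon > 0$ and a threshold $\delta = \delta(\epsilon) > 0$. \textbf{Case A:} no primitive real Dirichlet $L$-function has a real zero in $(1-\delta, 1)$. Then in particular $L(s,\chi_\Delta)$ has no such zero, and a standard effective argument---using the classical zero-free region for $\zeta_K(s) = \zeta(s)L(s,\chi_\Delta)$ together with the nonnegativity of its Dirichlet coefficients---gives $L(1,\chi_\Delta) \gg 1/\log\abs{\Delta}$. \textbf{Case B:} some primitive real character $\chi_1$ modulo $q_1$ has a real ``Siegel'' zero $\beta_1 \in (1-\delta, 1)$. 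Fix $\chi_1$ and $\beta_1$ once and for all, and compare $\chi_1$ with the running character $\chi_\Delta$ through the auxiliary function
\begin{equation}
F(s) = \zeta(s)\,L(s,\chi_1)\,L(s,\chi_\Delta)\,L(s,\chi_1\chi_\Delta),
\end{equation}
which, for $\abs{\Delta}$ large (so $\chi_\Delta \ne \chi_1$), agrees up to finitely many Euler factors with the Dedekind zeta function of a biquadratic field: it has nonnegative Dirichlet coefficients, constant term $1$, and a simple pole at $s = 1$ of residue $L(1,\chi_1)L(1,\chi_\Delta)L(1,\chi_1\chi_\Delta) > 0$. Exploiting the vanishing $L(\beta_1,\chi_1) = 0$ to kill the pole of $F$ at $s=1$ and comparing the size of $F$ at $s = \beta_1$ with its behaviour near $s=1$ (Estermann's lemma) forces a bound of the shape $L(1,\chi_\Delta) \gg_{\chi_1} (1-\beta_1)\,(q_1\abs{\Delta})^{-C(1-\beta_1)}$; choosing $\delta$ small enough that $C(1-\beta_1) < \epsilon$ makes the exponent admissible, while the remaining factor $1-\beta_1$, a fixed positive constant once $\epsilon$ is chosen, is absorbed into $\gg_\epsilon$. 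The non-effectivity is irreducible, since one cannot decide which case holds nor locate $\chi_1$ if it exists. Feeding $\log L(1,\chi_\Delta) \ge -\epsilon\log\abs{\Delta} - C_\epsilon$ back into the first paragraph shows the $\liminf$ of each ratio is $\ge 1 - 2\epsilon$; letting $\epsilon \to 0$ gives $\liminf \ge 1$, and with the previous paragraph both limits equal $1$.

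Finally, the cases $\Delta < 0$ and $\Delta > 0$ run entirely in parallel: the only structural difference is that the regulator $\log\e_\Delta$ enters the real formula---precisely the asymmetry visible between the two displayed limits---and the root-of-unity count $w_\Delta$ in the imaginary formula is bounded by $6$, contributing only to the $O(1)$ term. The maximality hypothesis is used solely to ensure that $\chi_\Delta$ is primitive of conductor $\abs{\Delta}$, so that Dirichlet's formula applies in the clean form written above. In short, apart from the one hard ingredient---Siegel's ineffective lower bound $L(1,\chi_\Delta) \gg_\epsilon \abs{\Delta}^{-\epsilon}$, which I expect to be the main obstacle---the proof is just Dirichlet's class number formula together with elementary partial summation.
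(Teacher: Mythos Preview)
The paper does not prove \Cref{thm:siegel}: it is stated as a classical result attributed to Siegel's 1935 paper, and is immediately superseded in the text by Hua's extension (\Cref{thm:hua}), which is the statement actually invoked later in the proof of \Cref{thm:bs00}. So there is no ``paper's own proof'' to compare against; the paper simply cites the literature.

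Your outline is the standard proof and is correct. The reduction via the analytic class number formula to the two-sided estimate $\abs{\Delta}^{-\epsilon} \ll_\epsilon L(1,\chi_\Delta) \ll_\epsilon \abs{\Delta}^{\epsilon}$ is exactly how the argument is usually organised; the upper bound is as elementary as you say, and the lower bound is Siegel's theorem proper, which you sketch via the exceptional-zero dichotomy and the auxiliary product $F(s)=\zeta(s)L(s,\chi_1)L(s,\chi_\Delta)L(s,\chi_1\chi_\Delta)$. One phrasing quibble in Case~B: the vanishing $L(\beta_1,\chi_1)=0$ does not ``kill the pole of $F$ at $s=1$''---the pole is still there---rather it forces $F(\beta_1)=0$, and the tension between this vanishing and the lower bound for $F$ near $\beta_1$ coming from the nonnegativity of the Dirichlet coefficients (together with the residue $\lambda = L(1,\chi_1)L(1,\chi_\Delta)L(1,\chi_1\chi_\Delta)$ at $s=1$) is what produces the inequality you write down. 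With that adjustment the shape of the bound and the ineffectivity remark are right.
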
 

This result is a special case of the more general Brauer--Siegel theorem, which applies to any sequence of number fields with discriminants with $\abs{\Delta} \to \infty$.

In 1957 L.-K. Hua gave an  extension of Siegel's theorem to quadratic orders, 
which appeared in English in 1982; see \cite[Theorem 12.15.4]{Hua:1982}.
\begin{thm}[Siegel-type theorem for quadratic orders]\label{thm:hua}
Let $\Delta$ run over discriminants of (not necessarily maximal) orders of quadratic number fields.
\begin{itemize}
\item[(1)]
For $\Delta < 0$, 
\begin{equation}
\lim_{\Delta \to - \infty} \frac{\log h_\Delta}{\log \sqrt{\abs{\Delta}}} = 1
\end{equation}
\item[(2)]
For $\Delta > 0$, 
\begin{equation}
\lim_{\Delta \to  \infty} \frac{\log h_\Delta \log \e_{\Delta}}{\log \sqrt{\Delta}} = 1.
\end{equation}
\end{itemize}
\end{thm}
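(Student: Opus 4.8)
The plan is to deduce the theorem from its maximal-order counterpart, \Cref{thm:siegel}, by feeding in the classical conductor formula for class numbers of orders. Write $\Delta = f^{2}\Delta_{0}$ with $\Delta_{0}$ a fundamental discriminant and $f \ge 1$ the conductor; set $R_{\Delta} := \log\e_{\Delta}$ when $\Delta > 0$ and $R_{\Delta} := 1$ when $\Delta < 0$, so that with this convention both (1) and (2) assert $\log(h_{\Delta}R_{\Delta})/\log\sqrt{\abs{\Delta}} \to 1$. The ring class number formula (see \cite[Chapter~5]{Halter-Koch13}) reads
\[
  h_{\Delta} \;=\; \frac{h_{\Delta_{0}}\, f}{\bigl[\OO_{K}^{\times} : \OO_{\Delta}^{\times}\bigr]}\,\prod_{p \divides f}\left(1 - \Bigl(\tfrac{\Delta_{0}}{p}\Bigr)\tfrac{1}{p}\right),
\]
where $\bigl(\tfrac{\Delta_{0}}{p}\bigr)$ is the Kronecker symbol. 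For $\Delta < 0$ the unit index $\bigl[\OO_{K}^{\times}:\OO_{\Delta}^{\times}\bigr]$ lies in $\{1,2,3\}$ (it is $1$ unless $\Delta_{0}\in\{-3,-4\}$); for $\Delta > 0$ it equals the integer $j$ with $\e_{\Delta} = \e_{\Delta_{0}}^{\,j}$, so that $R_{\Delta} = j\,R_{\Delta_{0}}$. Multiplying the formula through by $R_{\Delta}$ --- in the real case the factors of $j$ cancel, in the imaginary case the index is a bounded constant --- and taking logarithms yields, in all cases,
\[
  \log\bigl(h_{\Delta}R_{\Delta}\bigr) \;=\; \log\bigl(h_{\Delta_{0}}R_{\Delta_{0}}\bigr) + \log f + O\!\Bigl(\,\textstyle\sum_{p \divides f}\tfrac{1}{p}\Bigr) + O(1).
\]

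Next I would estimate the error terms and invoke \Cref{thm:siegel}. Each factor $1 - (\tfrac{\Delta_{0}}{p})\tfrac{1}{p}$ lies strictly between $1 - \tfrac{1}{p}$ and $1 + \tfrac{1}{p}$, so $\bigl|\sum_{p \divides f}\log\bigl(1 - (\tfrac{\Delta_{0}}{p})\tfrac{1}{p}\bigr)\bigr| \le 2\sum_{p \divides f}\tfrac{1}{p} = O(\log\log f)$ by Mertens' theorem; together with the $O(1)$ above this is $o\bigl(\log\sqrt{\abs{\Delta}}\,\bigr)$, since $\log\sqrt{\abs{\Delta}} = \log f + \log\sqrt{\abs{\Delta_{0}}}\to\infty$ while $\log\log f = o(\log f)$. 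By \Cref{thm:siegel}, $\log\bigl(h_{\Delta_{0}}R_{\Delta_{0}}\bigr) = \log\sqrt{\abs{\Delta_{0}}} + E(\Delta_{0})$ where $E(\Delta_{0}) = o\bigl(\log\sqrt{\abs{\Delta_{0}}}\,\bigr)$ as $\abs{\Delta_{0}}\to\infty$ and $E(\Delta_{0}) = O(1)$ on any finite set of $\Delta_{0}$. Substituting gives
\[
  \log\bigl(h_{\Delta}R_{\Delta}\bigr) \;=\; \log\sqrt{\abs{\Delta}} + E(\Delta_{0}) + o\bigl(\log\sqrt{\abs{\Delta}}\,\bigr),
\]
and a short $\varepsilon$-argument --- splitting according to whether $\abs{\Delta_{0}}$ exceeds the (ineffective) threshold supplied by \Cref{thm:siegel} for that $\varepsilon$ --- shows $E(\Delta_{0}) = o\bigl(\log\sqrt{\abs{\Delta}}\,\bigr)$ as $\abs{\Delta}\to\infty$ as well. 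Dividing by $\log\sqrt{\abs{\Delta}}$ and letting $\abs{\Delta}\to\infty$ then gives both parts.

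I do not expect any single estimate to be the real obstacle: once \Cref{thm:siegel} is granted the argument is bookkeeping, and the two displayed identities are classical. The delicate point is that \Cref{thm:siegel} is ineffective with a non-uniform error term, so its conclusion about the fundamental part $\Delta_{0}$ must be combined with the elementary control of the conductor part $f$ in a way valid across all three regimes --- $f$ bounded with $\abs{\Delta_{0}}\to\infty$, $\abs{\Delta_{0}}$ bounded with $f\to\infty$, and both growing --- which is precisely what the $\varepsilon$-argument handles; the ineffectivity of the conclusion is inherited entirely from \Cref{thm:siegel}. (An alternative route avoiding the conductor formula: factor the Dirichlet series of invertible ideals of $\OO_{\Delta}$ as $\zeta_{K}(s)$ times a finite Euler product over $p \divides f$, and compare residues at $s = 1$; the remaining bookkeeping is the same.)
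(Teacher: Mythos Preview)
The paper does not prove \Cref{thm:hua}; it quotes the result as Hua's theorem \cite[Theorem 12.15.4]{Hua:1982} and notes an alternate proof by Kawamoto and Tomita \cite{KawamotoT:12}. So there is no in-paper argument to compare against.

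Your deduction from \Cref{thm:siegel} via the ring class number formula is correct and is the standard way to bridge from maximal orders to arbitrary orders. The key steps --- the conductor formula, the cancellation of the unit index against $R_{\Delta}/R_{\Delta_{0}}$ in the real case, the Mertens bound $\sum_{p\divides f}1/p=O(\log\log f)$ on the Euler product, and the $\varepsilon$-splitting to absorb the ineffective Siegel error into $o(\log\sqrt{\abs{\Delta}})$ uniformly over all regimes of $(f,\Delta_{0})$ --- are all sound. In fact the paper itself runs essentially the same conductor-formula bookkeeping in its proof of \Cref{thm:bs00}(2) (the effective case with bounded $\Delta_{0}$), using the Neukirch form \eqref{eq:cnorder} of the formula; your argument is the natural ineffective analogue covering unbounded $\Delta_{0}$ as well.
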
 

An alternate proof is given by Kawamoto and Tomita \cite{KawamotoT:12}.

\subsection{Growth of class numbers of unit-generated orders}\label{subsec:31} 

In the following result, the remainder term in (1) is ineffective,
while the remainder term in (2) is effective.

\begin{thm}\label{thm:bs00} \
\begin{itemize}
\item[(1)]
For $\Delta_n^{\pm} = n^2 \mp 4$, the size of the class group $\Cl(\Delta_n^{\pm})$ obeys the asymptotic formula
\begin{equation}\label{eqn:unit-generated-order-bound}
\log \abs{\Cl(\Delta_n^\pm)} = \log n + o(\log n) \quad \mbox{as} \quad n \to \infty.
\end{equation}
\item[(2)]
For families $\{\OO^{(i)}: \, i \ge 1\}$ of unit-generated orders
whose discriminants $\Delta(\OO^{(i)}) = n_i^2 \pm 4$ satisfy $n_i \to \infty$ as $i \to \infty$,
and which have a bound $(\Delta(\OO_i))_0 \le N < \infty$  on their associated fundamental discriminants, 
\begin{equation}\label{eqn:main4}
\log \abs{\Cl(\OO^{(i)})}= \log n_i + O_{\!N}(\log\log (n_i+20))
\end{equation}
valid for all $n_i \ge 1$, with an effectively computable constant depending on $N$.
\end{itemize}
\end{thm}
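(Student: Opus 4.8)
The plan is to derive both parts from the class-number formula for orders together with the explicit structure of the families $\Delta_n^\pm = n^2 \mp 4$. Recall that for a real quadratic order $\OO_\Delta$ with $\Delta = f^2\Delta_0$ one has the relation (see \cite[Theorem 5.9.7]{Halter-Koch13})
\[
h_\Delta \,R_\Delta \;=\; h_{\Delta_0}\, R_{\Delta_0}\, f \prod_{p \mid f}\left(1 - \left(\tfrac{\Delta_0}{p}\right)\tfrac{1}{p}\right),
\]
where $R_\Delta = \log \e_\Delta$ is the regulator of the order and $\e_\Delta$ its fundamental unit. The crucial observation for our two families is that the generating unit $\e_n^\pm = \frac12(n + \sqrt{n^2\mp 4})$ \emph{is} the fundamental unit of $\OO_{\Delta_n^\pm}$ (this follows from \Cref{prop:22}(1) and \Cref{prop:23}, since $\e_n^\pm$ is the smallest unit $>1$ in the order it generates). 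Hence $R_{\Delta_n^\pm} = \log \e_n^\pm$, and since $\e_n^\pm = \frac12(n + \sqrt{n^2\mp 4})$ satisfies $\frac{n}{2} < \e_n^\pm < n$ for $n$ large (indeed $\e_n^\pm \sim n$), we get the clean two-sided bound
\[
\log n - \log 2 \;<\; R_{\Delta_n^\pm} \;<\; \log n
\]
for all sufficiently large $n$. In particular $R_{\Delta_n^\pm} = \log n + O(1)$, which is the feature that makes these families special: the regulator is essentially the logarithm of $\sqrt{\Delta}$ itself, so Siegel-type growth of $h_\Delta R_\Delta$ transfers directly to $h_\Delta$.

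For part (1), I would apply \Cref{thm:hua}(2) with $\Delta = \Delta_n^\pm \to \infty$. That theorem gives $\log(h_{\Delta_n^\pm} R_{\Delta_n^\pm}) = \log\sqrt{\Delta_n^\pm} + o(\log\sqrt{\Delta_n^\pm})$ as $n \to \infty$. Since $\sqrt{\Delta_n^\pm} = \sqrt{n^2 \mp 4} = n(1 + O(n^{-2}))$, the right-hand side is $\log n + o(\log n)$. On the left, $\log R_{\Delta_n^\pm} = \log\log n + O(1) = o(\log n)$ by the bound above, so $\log h_{\Delta_n^\pm} = \log(h_{\Delta_n^\pm}R_{\Delta_n^\pm}) - \log R_{\Delta_n^\pm} = \log n + o(\log n)$. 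This is exactly \eqref{eqn:unit-generated-order-bound}, and the ineffectivity is inherited entirely from the ineffectivity of the Siegel/Hua lower bound on $L(1,\chi_{\Delta_0})$.

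For part (2), the point is that when the fundamental discriminants $\Delta_0 = (\Delta(\OO^{(i)}))_0$ are restricted to the finite set of fundamental discriminants $\le N$, we no longer need Siegel's ineffective bound: for each of the finitely many $\Delta_0 \le N$ the quantity $h_{\Delta_0} R_{\Delta_0} L(1,\chi_{\Delta_0})$ is a fixed positive constant, computable in terms of $N$, and $L(1,\chi_{\Delta_0})$ is bounded above by $O(\log N)$ and below by an effective positive constant (again depending only on $N$, since there are finitely many characters involved). Writing $\Delta(\OO^{(i)}) = f_i^2 \Delta_0^{(i)}$, the conductor factor $f_i \prod_{p\mid f_i}(1 - (\frac{\Delta_0}{p})\frac{1}{p})$ lies between $f_i \prod_{p \mid f_i}(1 - \frac1p) \gg f_i / \log\log(f_i + 20)$ and $f_i$, so its logarithm is $\log f_i + O(\log\log(f_i+20))$; combined with $f_i \asymp_N \sqrt{\Delta(\OO^{(i)})} \asymp n_i$ (since $\Delta_0^{(i)}$ is bounded) and $R_{\Delta(\OO^{(i)})} = \log n_i + O(1)$, the class-number formula gives $\log h_{\Delta(\OO^{(i)})} = \log n_i + O_N(\log\log(n_i+20))$ with every implied constant effective. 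I would be a little careful about the small cases $\Delta_0 = 5$ where $\OO(5,1) = \OO(5,2)$ and where $f_i = F_i$ rather than growing linearly in $n_i$, but since there $n_i = L_i$ grows comparably to $F_i$ up to the fixed ratio $\sqrt5$, and one must use $n_i$ (not $\Delta$ or $f_i$) as the normalizing parameter, the stated form \eqref{eqn:main4} with the $n_i + 20$ shift still holds; the shift is there precisely to keep $\log\log$ well-defined and positive for the finitely many small $n_i$.

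The main obstacle is not any single step but the bookkeeping of effectivity in part (2): one must track that each appeal to a lower bound for $L(1,\chi_{\Delta_0})$, an upper bound for $R_{\Delta_0}$, and the Mertens-type estimate $\prod_{p \mid f}(1-\frac1p) \gg 1/\log\log(f+20)$ is effective and depends only on $N$, and that the finitely many exceptional small orders (those with $\Delta_0 \le 5$, and the $\OO_{\Delta_3^+} = \OO_{\Delta_1^-}$ coincidence) are absorbed into the implied constant rather than breaking the formula. The asymptotic content of part (1) is then a one-line deduction from \Cref{thm:hua} once the regulator identity $R_{\Delta_n^\pm} = \log n + O(1)$ is in hand, so the proof is short; the substance lies in observing that unit-generated orders are exactly the orders whose regulator is minimal in this strong sense.
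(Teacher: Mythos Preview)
Your approach is essentially the same as the paper's. Part (1) is identical: apply Hua's theorem and use $R_{\Delta_n^\pm} = \log n + O(1)$. For part (2) you use the Euler-product form of the order class-number formula, while the paper uses the equivalent Neukirch form $h_\Delta = \dfrac{h_{\Delta_0}}{[\OO_{\Delta_0}^\times:\OO_\Delta^\times]}\cdot\dfrac{|(\OO_{\Delta_0}/f\OO_{\Delta_0})^\times|}{|(\OO_\Delta/f\OO_{\Delta_0})^\times|}$ and bounds the second fraction between $\varphi(f)$ and $f^2/\varphi(f)$; the effective $\varphi(f) \gg f/\log\log f$ estimate then appears in both arguments.

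One small slip: your upper bound ``$\le f_i$'' on $f_i\prod_{p\mid f_i}\bigl(1-(\tfrac{\Delta_0}{p})\tfrac1p\bigr)$ is not quite right, since primes with $(\tfrac{\Delta_0}{p})=-1$ contribute factors $>1$. The correct upper bound is $f_i\prod_{p\mid f_i}(1+\tfrac1p) \le f_i\cdot f_i/\varphi(f_i) \ll f_i\log\log f_i$, which after taking logarithms still gives $\log f_i + O(\log\log f_i)$, so your conclusion stands. (Also, the phrase ``$h_{\Delta_0}R_{\Delta_0}L(1,\chi_{\Delta_0})$ is a fixed positive constant'' is redundant---by Dirichlet's formula $h_{\Delta_0}R_{\Delta_0}$ already \emph{is} a constant multiple of $\sqrt{\Delta_0}\,L(1,\chi_{\Delta_0})$---but the intended point, that with $\Delta_0\le N$ these are finitely many effectively computable constants, is correct.)
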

\begin{proof}
(1) We study $h_{\Delta}= \abs{\Cl(\OO_{\Delta})}$
for a real quadratic order.
For a unit-generated order with $\Delta = \Delta_n^{\pm}= n^2 \mp 4$,
we have $\log \Delta =2 \log n + O(1)$ for $n \ge 4$, and $\log \e_{\Delta} = \log n +O(1)$ for $n \ge 4$. Consequently, 
we have
\begin{equation}
\frac{\log h_\Delta \log \e_{\Delta}}{\log \Delta} 
= \frac{\log h_\Delta}{\log \Delta} + \frac{\log\log \e_{\Delta}}{\log \Delta}
= \frac{\log h_\Delta}{\log \Delta} + O\!\left(\frac{\log \log n}{\log n}\right).
\end{equation}
Applying Hua's Theorem \ref{thm:hua} for real quadratic orders, 
restricted to  discriminants $\Delta = \Delta_n^{\pm}$ (for each sign separately), we obtain $h_\Delta = \abs{\Cl(\Delta^\pm_n)}$ and 
\begin{equation}
\lim_{n \to \infty} \frac{\log \abs{\Cl(\Delta^\pm_n)}}{\log \sqrt{\Delta_n^{\pm}}} = 1,
\end{equation} 
which gives \eqref{eqn:unit-generated-order-bound}.

(2) Let $\{\sO^{(i)}\}$ be an  infinite subfamily of the full  family $\Delta_n^{\pm}$ of all unit-generated orders,
having the property that all discriminants $\Delta(\sO^{(i)})$ have associated fundamental discriminant 
$\Delta_0(\sO^{(i)}) \le N$, with the bound $N$ known in advance.
Write
$\Delta(\sO^{(i)})=(f_{\sO^{(i)}})^2 \Delta_0(\sO^{(i)})$.
A set $\{\Delta(\sO^{(i)})\}$ of discriminants with 
associated fundamental discriminant bounded by $N$  can be infinite because every fundamental discriminant $\Delta_0$ appears infinitely many
times in the list of $(\Delta_n^\pm)_0$. 
For such a subfamily  necessarily  the conductors $f_{\sO^{(i)}}\to \infty$ as $i \to \infty$.

We write unit-generated order $\Delta_n^{\pm} = f_{\Delta}^2\Delta_0$, where  $\Delta_0$ is fundamental.
We set $f= f_{\Delta}$ and use the formula for the class number of an order given in Neukirch \cite[Theorem 12.12]{Neukirch13}:
\begin{equation}\label{eq:cnorder}
h_\Delta = \frac{h_{\!\Delta_0}}{[\OO_{\!\Delta_0}^\times : \OO_{\!\Delta}^\times]} \frac{\abs{\left(\OO_{\!\Delta_0}/f\OO_{\!\Delta_0}\right)^\times}}{\abs{\left(\OO_\Delta/f\OO_{\!\Delta_0}\right)^\times}}.
\end{equation}
Taking a logarithm on both sides, 
\begin{equation}\label{eq:log-cnorder}
\log h_\Delta
= 
\log\left(\frac{h_{\!\Delta_0}}{[\OO_{\!\Delta_0}^\times : \OO_{\!\Delta}^\times]}\right)
+
\log\left(\frac{\abs{\left(\OO_{\!\Delta_0}/f\OO_{\!\Delta_0}\right)^\times}}{\abs{\left(\OO_\Delta/f\OO_{\!\Delta_0}\right)^\times}}\right).
\end{equation}

We use the identity \eqref{eq:log-cnorder}.
We bound $\abs{\left(\OO_{\!\Delta_0}/f\OO_{\!\Delta_0}\right)^\times}$ above by $f^2$ and below by
\begin{align}
\abs{\left(\OO_{\!\Delta_0}/f\OO_{\!\Delta_0}\right)^\times}
&= f^2 \prod_{\mathfrak{p}|f} \left(1-\frac{1}{\Nm(\mathfrak{p})}\right) 
\geq f^2 \prod_{p|f} \left(1-\frac{1}{p}\right)^2
= \varphi(f)^2,
\end{align}
where the first product is over prime ideals of $\OO_{\!\Delta_0}$ and the second product is over rational primes.
Since $\sqrt{\Delta} = f \sqrt{\Delta_0} \con 0 \Mod{f\OO_{\!\Delta_0}}$, we deduce
that 
$\left(\OO_\Delta/f\OO_{\!\Delta_0}\right)^\times \isom \left(\Z/f\Z\right)^\times$, 
whence $\abs{\left(\OO_\Delta/f\OO_{\!\Delta_0}\right)^\times} = \varphi(f)$. We conclude 

\begin{align}
\frac{f^2}{\varphi(f)} \geq 
\frac{\abs{\left(\OO_{\!\Delta_0}/f\OO_{\!\Delta_0}\right)^\times}}{\abs{\left(\OO_\Delta/f\OO_{\!\Delta_0}\right)^\times}}
\geq \varphi(f).
\end{align}

It is known that 
\begin{equation}
\varphi(f) > \frac{f}{\ell(f)} \quad \mbox{for all} \quad f \ge 2.
\end{equation}
where $\ell(n)$ is the  (effectively computable) function  given by
\begin{equation}
\ell(n) := e^{\gamma} \log\log n + \frac{5}{2\log\log n};
\end{equation}
see \cite[Theorem 15]{RosserS:1962} and \cite[Theorem 8.8.7]{BachS:1996}.
Applying this estimate (on both sides) yields
\begin{equation}
f \,\ell(f) > \frac{\abs{\left(\OO_{\!\Delta_0}/f\OO_{\!\Delta_0}\right)^\times}}{\abs{\left(\OO_\Delta/f\OO_{\!\Delta_0}\right)^\times}} > \frac{f}{\ell(f)},
\end{equation}
Taking logarithms  of these quantities yields the effective bound
\begin{equation}\label{eq:ineq2}
\log f - \log \ell(f)
< \log\!\left(\frac{\abs{\left(\OO_{\!\Delta_0}/f\OO_{\!\Delta_0}\right)^\times}}{\abs{\left(\OO_\Delta/f\OO_{\!\Delta_0}\right)^\times}}\right)
< \log f + \log \ell(f).
\end{equation}

We have 
\begin{equation}\label{eq:BS00}
[\OO_{\!\Delta_0}^\times : \OO_{\!\Delta}^\times] 
= \frac{\log \e_{\!\Delta}}{\log \e_{\!\Delta_0}}.
\end{equation}
Therefore we have
\begin{equation}
\log\!\left(\frac{h_{\!\Delta_0}}{[\OO_{\!\Delta_0}^\times : \OO_{\!\Delta}^\times]}\right)
= 
\log\!\left(\frac{h_{\!\Delta_0}\log \e_{\Delta_0}}{\log \e_{\Delta}}\right).
\end{equation}
Since $\Delta_0$ is bounded, there are a finite set of such $\Delta_0$ 
and of associated log fundamental units $\log \e_{\Delta_0}$, 
hence there is a  positive constant $B$ (depending on $N$) such that
\begin{equation}\label{eq:finineq1}
-B - \log\log\e_\Delta \leq \log\!\left(\frac{h_{\!\Delta_0}}{[\OO_{\!\Delta_0}^\times : \OO_{\!\Delta}^\times]}\right) \leq B - \log\log\e_\Delta.
\end{equation}

Using the bounds \eqref{eq:ineq2}
together with \eqref{eq:finineq1} to estimate the terms on the right side
of the identity \eqref{eq:log-cnorder} yields 
\begin{equation}
\abs{\log h_\Delta - \left(\log\sqrt\Delta_0 + \log f\right)} < B + \log\sqrt\Delta_0 + \abs{\log\log\e_\Delta} + \log\ell(f).
\end{equation}
We have inserted an extra term $\log\sqrt\Delta_0$ on both sides of this expression, noting that $ \Delta_0 \le N$.
On the left-hand side, we have the bound
\begin{equation}
\log\sqrt\Delta_0 + \log f = \log \sqrt{\Delta} = \log n +O(1),
\end{equation}
valid for all $n \ge 4$. Thus we obtain
\begin{equation}\label{eq:bound2}
\abs{\log h_\Delta - \log n} < B' + B + \log N + \abs{\log\log\e_\Delta} + \log\ell(f).
\end{equation}
for an effectively computable constant $B'$.
On the right-hand side of \eqref{eq:bound2}, we have 
\begin{equation}
\log\ell(f)\le\log\log\log (f +20)+O (1) \le \log\log \log (n+20) + O(1), 
\end{equation}
valid for all $f \ge 1$.
Also we have 
\begin{equation}\label{eq:unit-bound}
\log\log\e_\Delta \leq \log\log\e_\Delta^+ = \log\log\frac{n+\sqrt{\Delta_n^\pm}}{2} \le \log\log(n+20) +O(1),
\end{equation}
valid for all $n \ge 1$ where $\Delta_n^{\pm}$ is real quadratic. 
Substituting these  bounds into \eqref{eq:bound2} yields
\begin{equation}\label{eq:finineq2}
\abs{\log h_\Delta - \log n} =  O (\log\log (n + 20) + \log\log\log (n+20) )
\end{equation}
with an effectively computable constant in the $O$-symbol, valid
uniformly for all $n \ge 1$. 

We actually have a sequence of orders $\{ \sO^{(i)} \}$ and
$\Delta( \sO^{(i)}) = \Delta_{n_i}^{ \pm}$ for some sign $\pm$.
So we  conclude by \eqref{eq:finineq2} that 
\begin{equation}\label{eq:bound4}
\log  h_{\Delta(\sO^{(i)})}=
\log h_{\Delta_{n_i}^{\pm}} = \log n_i + O_{\!N}(\log\log (n_i+20)).
\end{equation}
valid for all $n_i \ge 1$ with an effectively computable constant in the big-$O$.
\end{proof}

\section{Unit-generated orders with trivial class group}\label{sec:4}

We classify all unit-generated quadratic orders with class number one.

\subsection{Unit-generated maximal orders with class number one}\label{subsec:41a}

We prove \Cref{thm:M-classno-1} by analysis of cases.
 
\subsubsection{$\Delta^-_n = n^2+4$}\label{subsubsec:411} 
We divide the case $\Delta = \Delta^-_n = n^2+4$ ($n \ge 1$) into two subclasses, according to the parity of $n$.
For $n$ odd, in 1986 Yokoi \cite{Yokoi86} conjectured that all  maximal orders with discriminants $\Delta=n^2+4$ 
that have class number one must have $n \le 17$. In 2003 Bir\'{o} \cite{Biro03a} proved Yokoi's conjecture.

\begin{thm}[Bir\'{o}]\label{thm:Biro} 
If $\Delta = n^2+4$ is the discriminant of a maximal order with odd $n=2m+1 \ge 0$, having class number $h_\Delta=1$, then necessarily $n \le 17$.
The solutions are $n \in \{1, 3, 5, 7, 13, 17\}, $ with discriminants $\Delta \in \{5, 13, 29, 53, 173, 293\}.$
\end{thm}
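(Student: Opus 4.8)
The plan is to split the statement into two parts: the effective bound $n \le 17$, which is Bir\'o's theorem and which I would prove by invoking \cite{Biro03a}, and the enumeration of the six solutions, which is a finite verification. First I would recall the shape of Bir\'o's argument. For $\Delta = n^2+4$ with $n$ odd, the fundamental unit of $\OO_K$ is $\e_\Delta = \tfrac12\bigl(n+\sqrt{n^2+4}\bigr)$, so $\log\e_\Delta = \log n + o(1)$, and the Dirichlet class number formula reads $h_\Delta\log\e_\Delta = \tfrac12\sqrt{\Delta}\,L(1,\chi_\Delta)$, where $\chi_\Delta$ is the Kronecker symbol attached to $\Delta$. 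Hence $h_\Delta = 1$ would force $L(1,\chi_\Delta)$ to be anomalously small, of size $\asymp (\log n)/n$. By Brauer--Siegel (or Tatuzawa's effective version) this can happen for only finitely many $n$, but with at most one genuinely exceptional $\Delta$; that input is ineffective and does not by itself eliminate the exception or give the sharp bound.

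The new ingredient, and the genuine obstacle, is an \emph{exact} arithmetic identity, which I would simply take from \cite{Biro03a}. Bir\'o evaluates the special value at $s=0$ of a Hecke $L$-function of $K$ attached to a ray class character of small prime-power conductor by Shintani's method, obtaining a closed form that is a finite sum of Barnes/Dedekind-sum-type terms, explicit in $n$. When $h_\Delta = 1$ the relevant ray class group is as small as possible, which pins this special value to a specific rational (essentially integral) quantity; the resulting congruence and size constraints on the Dedekind-sum expression are incompatible with $n$ large and, crucially, also rule out the putative Siegel-exceptional $\Delta$. This yields the clean bound $n \le 17$. Reproving this identity from scratch is the hard part, and for the purposes of this paper it suffices to cite it.

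Given the bound, the remaining task is a finite check. The odd $n$ with $1 \le n \le 17$ for which $\Delta = n^2+4$ is actually a fundamental discriminant — equivalently, for which $n^2+4$ is squarefree, since $n^2+4 \equiv 5 \Mod 8$ automatically for $n$ odd — are $n \in \{1,3,5,7,9,13,15,17\}$; the value $n = 11$ is excluded because $11^2+4 = 125 = 5^3$ is not squarefree, so $\OO_{125}\subsetneq\OO_5$ is non-maximal and contributes nothing here. Computing $h_\Delta$ for the eight admissible discriminants $5,13,29,53,85,173,229,293$ — either directly, by counting $\SL_2(\Z)$-classes of reduced primitive binary quadratic forms of discriminant $\Delta$, or from the standard tables for real quadratic fields of narrow Richaud--Degert type — gives $h_\Delta = 1$ exactly for $\Delta \in \{5,13,29,53,173,293\}$, i.e.\ $n \in \{1,3,5,7,13,17\}$, while $h_{85} = 2$ and $h_{229} = 3$. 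This completes the proof.
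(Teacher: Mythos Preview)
Your proposal is correct and matches the paper's treatment: the paper does not prove this theorem at all but simply attributes it to Bir\'o \cite{Biro03a} as the resolution of Yokoi's conjecture, so invoking \cite{Biro03a} for the bound $n\le 17$ is exactly what is done. Your finite verification of the eight candidate discriminants (including the exclusion of $n=11$ and the computations $h_{85}=2$, $h_{229}=3$) is more detail than the paper supplies, but it is correct and consistent with the stated solution list.
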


We resolve the second subcase, of orders having discriminant $\Delta=n^2+4$ with $n$ even, using genus theory.

\begin{thm}\label{thm:RQO1} 
Let $\Delta = n^2+4$ for $n \geq 1$.
\begin{itemize}
\item[(1)] 
If $\OO_\Delta$ is a maximal order with $n=2m$ even, and  with class number $h_\Delta=1$, then $m=1$, giving $n =2$ and  discriminant $\Delta \in \{8\}$.
\item[(2)]
If $n \equiv 2 \Mod{4}$,  
then the class number $h_\Delta$ is even,
regardless of whether $\OO_\Delta$ is maximal or non-maximal.
\end{itemize} 
\end{thm}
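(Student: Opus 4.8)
The plan is to deduce both parts from genus theory, specifically from \Cref{prop:genusorder} together with the fact that every order $\OO_{n^2+4}$ contains a unit of norm $-1$. Write $n = 2m$, so $\Delta = n^2 + 4 = 4(m^2+1)$; I would first separate on the parity of $m$. If $m$ is even, then $m^2+1$ is odd and $\equiv 1 \Mod{4}$, so the fundamental discriminant associated to $\OO_\Delta$ is odd, the conductor of $\OO_\Delta$ is even, and $\OO_\Delta$ is never maximal; this is the case $n \equiv 0 \Mod{4}$. If $m$ is odd, then $m^2 \equiv 1 \Mod{8}$, hence $m^2+1 = 2D'$ with $D' := \frac{m^2+1}{2}$ odd and $\equiv 1 \Mod{4}$; thus $\Delta = 8D'$ with $D'$ odd, so $\Delta \equiv 8 \Mod{16}$, which is the case $n \equiv 2 \Mod{4}$, and here $\OO_\Delta$ is maximal precisely when $D'$ is squarefree. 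Observe that $D' = 1$ exactly when $m = 1$, i.e.\ $\Delta = 8$, while $D' \ge 5$ for every odd $m \ge 3$.

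For part (2), take $n \equiv 2 \Mod{4}$, so $\Delta = 8D'$ with $D'$ odd, and assume $D' > 1$ (equivalently $n \ge 6$; the only excluded value is $n = 2$, $\Delta = 8$). Then $D'$ has at least one odd prime factor, so in the notation of \Cref{subsec:33} the number $r$ of distinct odd primes dividing $\Delta$ satisfies $r \ge 1$; since $\Delta \equiv 8 \Mod{16}$, we get $\mu(\Delta) = r + 1 \ge 2$, and \Cref{prop:genusorder} gives $\abs{\mathfrak{F}_\Delta[2]} = 2^{\mu(\Delta)-1} \ge 2$, so $\Cl^{+}(\OO_\Delta)/\Cl^{+}(\OO_\Delta)^2$ is nontrivial. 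Now $\OO_\Delta = \OO_{n^2+4}$ contains the unit $\e_n^{-} = \frac{1}{2}(n+\sqrt{n^2+4})$ with $\Nm(\e_n^{-}) = -1$ (\Cref{prop:23}), so the canonical surjection $\Cl^{+}(\OO_\Delta) \to \Cl(\OO_\Delta)$ is an isomorphism; hence $\Cl(\OO_\Delta)/\Cl(\OO_\Delta)^2 \cong \Cl^{+}(\OO_\Delta)/\Cl^{+}(\OO_\Delta)^2$ is nontrivial as well, so $\Cl(\OO_\Delta)$ contains an element of order $2$ and $h_\Delta$ is even. This argument never used whether $\OO_\Delta$ is maximal, so it covers both subcases of the statement.

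Part (1) is then immediate: if $\OO_\Delta$ is a maximal order with $n = 2m$ even, the dichotomy above forces $m$ to be odd (and $D'$ squarefree), so $\Delta = 8D'$; if moreover $m \ge 3$, part (2) shows $h_\Delta$ is even, hence $h_\Delta \ne 1$. Therefore $h_\Delta = 1$ forces $m = 1$, i.e.\ $n = 2$ and $\Delta = 8$; and conversely $\OO_8 = \Z[\sqrt{2}]$ is the maximal order of $\Q(\sqrt{2})$, which has class number one, so $\Delta = 8$ is the unique solution.

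I do not anticipate a serious obstacle: the proof is short once \Cref{prop:23} and \Cref{prop:genusorder} are available. The one delicate point is the passage from the narrow class group to the wide class group. \Cref{prop:genusorder} controls $\mathfrak{F}_\Delta[2] \cong \Cl^{+}(\OO_\Delta)/\Cl^{+}(\OO_\Delta)^2$, and when $D'$ is a prime power this narrow genus group has order exactly $2$, so one genuinely needs the norm-$(-1)$ unit to transfer nontriviality to the wide class group. That unit is automatic for every order in the $\Delta^{-}$ family, which is exactly what makes the even-$n$ case elementary, in contrast with the deep analytic input (Bir\'{o} \cite{Biro03a}; Byeon--Kim--Lee \cite{BKL}) required for odd $n$.
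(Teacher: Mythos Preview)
Your argument is correct and somewhat cleaner than the paper's.  Both proofs are genus-theoretic, but they package the genus theory differently.

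The paper applies Halter--Koch's criterion (\Cref{prop:HK}) for odd wide class number and rules out each case separately.  The delicate case is (2)(d), $\Delta = 8p^r$ with $p \equiv 3 \Mod 4$: writing $\Delta = 8D'$ with $D' = \tfrac{(2k+1)^2+1}{2}$, the paper observes that any odd prime $p \mid D'$ satisfies $(2k+1)^2 \equiv -1 \Mod p$, forcing $p \equiv 1 \Mod 4$ and eliminating the case.  You instead compute $\mu(\Delta) \ge 2$ directly from \Cref{prop:genusorder} to get $\abs{\Cl^+(\OO_\Delta)[2]} \ge 2$, and then invoke the norm $-1$ unit $\e_n^-$ furnished by \Cref{prop:23} to identify $\Cl^+(\OO_\Delta) \cong \Cl(\OO_\Delta)$.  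This sidesteps the case analysis entirely; the potentially tricky case (d) never arises, because the existence of a norm $-1$ unit already forces $h_\Delta^+ = h_\Delta$.  Your route makes transparent \emph{why} the $\Delta^-$ family is the easy one: the norm $-1$ unit is built into every order $\OO_{n^2+4}$.  The paper's route, on the other hand, is phrased entirely in terms of the discriminant and would transfer more directly to families where no norm $-1$ unit is available.

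One minor remark: part (2) as literally stated includes $n=2$, where $h_8 = 1$ is odd.  Both your argument and the paper's tacitly exclude this value (you make the exclusion explicit), so this is an imprecision in the statement rather than a gap in your proof.
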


To prove the result, we use known criteria from genus theory for oddness of class numbers of quadratic orders.

\begin{prop}\label{prop:HK}
Let $\Delta$ be a quadratic discriminant.
\begin{enumerate}
\item[(1)]
If $\Delta>0$, then the narrow class number $h^{+}_\Delta = \abs{\Cl^+(\OO_\Delta)}$ is odd if and only if
\begin{enumerate}
\item
either $\Delta \in \{p^r, 4p^r \}$ for some $p \equiv 1 \Mod{4}$ and odd $r \in \N$,
\item 
or $\Delta=8$.
\end{enumerate}
In all these cases $\Nm(\e_{\Delta}) = -1.$
\item[(2)]
If $\Delta >0$ then the (wide) class number $h_\Delta$ is odd if and only if 
\begin{enumerate}
\item
either $h^{+}_\Delta$ is odd, 
\item
or $\Delta= \{p^r q^s, 4 p^r q^s\} $ where $p\ne q$ are odd primes, $r$ and $s$ are not both even, $p \equiv \, 3 \Mod{4}$,
and $p^r q^s \equiv \, 1 \Mod{4}$,
\item
or $\Delta = 4p^r \equiv 12 \Mod{16}$ for some prime $p$ and odd $r \in \N$,
\item
or $\Delta \in \{8p^r, 16p^r\}$ for some prime $p \equiv 3 \Mod{4}$ and $r \in \N,$ 
\item
or $\Delta = 32$.
\end{enumerate}
\item[(3)]
If $\Delta$ is a fundamental discriminant, then the (wide) class number $h_\Delta$ is odd if and only if
\begin{enumerate}
\item
either $\Delta= (-1)^{(p-1)/2}p$ for some odd prime $p$,
\item
or $\Delta\in \{4p, 8p\}$ for some prime $p \equiv 3 \Mod{4}$,
\item
or $\Delta = pq$ for some primes $p, q$ with $p \ne q$ and $p\equiv q \equiv 3 \Mod{4}$,
\item
or $\Delta \in \{-4, \pm 8\}$.
\end{enumerate}
\end{enumerate}
\end{prop}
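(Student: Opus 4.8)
All three parts reduce to genus theory. A finite abelian group has odd order precisely when it coincides with its subgroup of squares, so $h^+_\Delta = \abs{\Cl^+(\OO_\Delta)}$ is odd exactly when the genus group $\mathfrak{F}_\Delta/\mathfrak{F}_\Delta^2 \isom \Cl^+(\OO_\Delta)/\Cl^+(\OO_\Delta)^2$ is trivial, which by \Cref{prop:genusorder} happens exactly when $\mu(\Delta)=1$; likewise $h_\Delta$ is odd exactly when $\Cl(\OO_\Delta)/\Cl(\OO_\Delta)^2$ is trivial. Writing $C_0 := [(\sqrt\Delta)]$ for the generator of the kernel of the natural surjection $\Cl^+(\OO_\Delta)\to\Cl(\OO_\Delta)$ --- this kernel is trivial if $\Nm(\e_\Delta)=-1$ and is cyclic of order $2$ if $\Nm(\e_\Delta)=+1$ --- one has $\Cl(\OO_\Delta)/\Cl(\OO_\Delta)^2 \isom \Cl^+(\OO_\Delta)/\bigl(\langle C_0\rangle\,\Cl^+(\OO_\Delta)^2\bigr)$, whose order is $2^{\mu(\Delta)-1}$ or $2^{\mu(\Delta)-2}$. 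Hence $h_\Delta$ is odd if and only if either (i) $\mu(\Delta)=1$, or (ii) $\mu(\Delta)=2$ together with $\Nm(\e_\Delta)=+1$ and $C_0 \notin \Cl^+(\OO_\Delta)^2$.

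For part (1) I would then solve $\mu(\Delta)=1$ directly from its three-case definition, using that a quadratic discriminant is a nonsquare congruent to $0$ or $1$ modulo $4$. When $\Delta$ is odd this forces $\Delta=p^r$ with $r$ odd, and $\Delta\equiv 1\Mod{4}$ then forces $p\equiv 1\Mod{4}$; when $4\mid\Delta$ it forces either $\Delta=4p^r$ with $r$ odd and $p\equiv 1\Mod{4}$ (from the $\Delta\equiv 4\Mod{16}$ branch) or $\Delta=8$ (the only nonsquare power of $2$ of $\mu$-value $1$). This is exactly list (1a)--(1b). The supplementary claim $\Nm(\e_\Delta)=-1$ is immediate, since $\Nm(\e_\Delta)=+1$ would make $\abs{\Cl^+(\OO_\Delta)}=2\abs{\Cl(\OO_\Delta)}$ even.

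For parts (2) and (3) I would feed the dichotomy (i)/(ii) back in. Case (2a) is simply ``$\mu(\Delta)=1$'', i.e.\ the list of part (1). For the remainder I would enumerate all quadratic discriminants with $\mu(\Delta)=2$ --- these are the shapes $p^rq^s$, $4p^rq^s$, $4p^r$, $8p^r$, $16p^r$, and pure powers of $2$ divisible by $32$, after imposing nonsquareness and $\Delta\equiv 0,1\Mod{4}$ --- and for each check the two further conditions of (ii). The condition $\Nm(\e_\Delta)=+1$ is forced whenever some odd prime $q\equiv 3\Mod{4}$ divides $\Delta$ (since $a^2-b^2\Delta=\pm4$ with $\Nm(\e_\Delta)=-1$ would make $-1$ a square modulo every odd prime dividing $\Delta$), and in the cases $8\mid\Delta$ or $\Delta$ a power of $2$ it is forced by the structure of the unit group at $2$; this is the source of the congruences $p\equiv 3\Mod{4}$, $\Delta\equiv 12\Mod{16}$, etc., appearing in (2b)--(2e). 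The remaining condition $C_0\notin\Cl^+(\OO_\Delta)^2$ --- equivalently, that the $2$-Sylow subgroup of $\Cl^+(\OO_\Delta)$ is cyclic of order exactly $2$, i.e.\ has vanishing $4$-rank --- is decided by the unique nontrivial genus character $\chi$ of the order: $C_0\in\Cl^+(\OO_\Delta)^2$ if and only if $\chi(C_0)=1$, and $\chi(C_0)$ is an explicit product of Kronecker symbols over the primes ramifying in $\OO_\Delta$, arising from the factorization of the ideal $(\sqrt\Delta)$. Evaluating that product in each shape yields the remaining conditions of (2b)--(2e). Part (3) is the specialization to fundamental $\Delta$, where all exponents equal $1$: the $\mu=1$ list collapses to $\Delta=p$ ($p\equiv 1\Mod{4}$) and $\Delta=8$, and the $\mu=2$ list collapses to $\Delta\in\{4p,8p\}$ and $\Delta=pq$, with the congruences above becoming $p\equiv 3\Mod{4}$ and $p\equiv q\equiv 3\Mod{4}$; for $\Delta<0$ there is no narrow/wide distinction, so $h_\Delta$ is odd iff $\mu(\Delta)=1$, which for fundamental $\Delta<0$ gives $\Delta=-p$ ($p\equiv 3\Mod{4}$) and $\Delta\in\{-4,-8\}$. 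Merging the real and imaginary cases, and writing $(-1)^{(p-1)/2}p$ to absorb both signs, yields (3a)--(3d).

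The genuine work --- and the only step requiring quadratic-reciprocity input beyond the bare $2$-rank count of \Cref{prop:genusorder} --- is the evaluation of the genus character $\chi$ on the ambiguous class $[(\sqrt\Delta)]$ in part (2), i.e.\ deciding precisely when that class lies in the principal genus. This is the classical circle of ideas of Gauss, R\'edei, and Reichardt; equivalently one may simply invoke the oddness criteria collected in \cite[Chapter 6]{Halter-Koch13}. Everything else is elementary $2$-adic bookkeeping with $\mu(\Delta)$.
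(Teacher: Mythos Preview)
The paper does not prove this proposition at all; its proof is the single sentence ``This is the real quadratic case of a result of Halter-Koch \cite[Theorem 5.6.13]{Halter-Koch13}.'' Your sketch is therefore strictly more than what the paper offers, and the outline is the right one: reduce oddness of $h^+_\Delta$ to $\mu(\Delta)=1$ via \Cref{prop:genusorder}, and reduce oddness of $h_\Delta$ to the dichotomy $\mu(\Delta)=1$ versus $\mu(\Delta)=2$ together with the condition that the ambiguous class $C_0=[(\sqrt\Delta)]$ lies outside $\Cl^+(\OO_\Delta)^2$. This is indeed how the classical arguments (Gauss, R\'edei--Reichardt, and Halter-Koch's presentation) proceed, so there is nothing to compare beyond noting that you sketch what the paper merely cites.

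One inaccuracy in your exposition: the claim that ``in the cases $8\mid\Delta$ \ldots\ [the condition $\Nm(\e_\Delta)=+1$] is forced by the structure of the unit group at $2$'' is false. For $\Delta=40=8\cdot 5$ (which has $\mu(\Delta)=2$), the fundamental unit $3+\sqrt{10}$ has norm $-1$. More generally, in case (2d) the restriction $p\equiv 3\Mod{4}$ is not explained by the norm condition alone: when $p\equiv 1\Mod{4}$ either sign of the norm can occur, and when the norm happens to be $+1$ it is the genus-character evaluation $\chi(C_0)$---your ``remaining condition''---that rules the case out. So the division of labor between your two conditions is not as you describe; the congruences in (2b)--(2e) are the combined output of \emph{both} $\Nm(\e_\Delta)=+1$ and $C_0\notin\Cl^+(\OO_\Delta)^2$, not of the former alone. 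Since you ultimately defer to Halter-Koch for that evaluation anyway, this does not affect the soundness of your strategy, only the attribution of which hypothesis produces which congruence.
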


\begin{proof} 
This is the real quadratic case of a result of Halter-Koch \cite[Theorem 5.6.13]{Halter-Koch13}. 
\end{proof}

\begin{proof}[Proof of \Cref{thm:RQO1}]
(1) We first note that $\Delta=8$ has $h_\Delta=1$, coming from $n=2$.
In what follows we suppose $n \ge 4$, so $\Delta \ne 8$.

By hypothesis $n=2m$, so $\Delta= 4m^2+4 = 4(m^2+1)$. There are two cases.\\

{\bf Case 1.} {\it $m=2k$ is even, so $n=4k$, with $k \ge 1$.}\\

Then $\Delta = 4(4k^2+1)$.
Now $\Delta$ cannot be a {fundamental} discriminant since $\Delta' = 4k^2+1$ is already a quadratic discriminant.
So Case 1 is ruled out.\\

{\bf Case 2.} {\it $m=2k+1$ is odd, so $n =4k+2$, with $k \ge 1$.}\\

Then $\Delta= 4 (4k^2+4k +2) =8(2k(k+1)+1)$. 
In particular $\Delta \equiv 8 \Mod{16}$. 
We first show that $h^{+}_\Delta$ is even
by ruling out all cases in \Cref{prop:HK}(1).
Case (a) requires $\Delta \in \{ p^r, 4pr\}$ for $p \equiv 1 \Mod{4}$ with $r$ odd,
and is
ruled out since $8 \divides \Delta$.
Case (b) 
requiring  $\Delta=8$ is excluded  by 
the restriction that $k \ge 1$.

We claim that $h_\Delta$ is also even.
We rule out all cases in the criterion of \Cref{prop:HK}(2)
for it to be odd. Case (a) is ruled out since 
$h^{+}_\Delta$ is even. Case (b) that  $\Delta \in \{p^r q^s, 4 p^r q^s\}$ for $p,q$ odd primes
and case (c) that $\Delta \equiv 12 \Mod{16}$ are  ruled out 
since $8 \divides \Delta$.
The property $16 \nmid \Delta$ rules out one part of case (d), in which $\Delta= 16 p^r$, and case (e) that $\Delta= 32$.
It remains to treat the remaining part of case (d),
that  $\Delta= 8 p^r$ with $p$ prime, $p \equiv 3 \Mod{4}$, and $r \ge 1$. We argue
by contradiction. If equality held, then 
$\Delta=4(4k^2 +4k +2) \equiv 0 \Mod{p}$. However $4k^2+ 4k+2= (2k+1)^2 + 1$ hence $(2k+1)^2 \equiv -1 \Mod{p}$,
which is impossible since $p \equiv 3 \Mod{4}$ by hypothesis.
The claim follows.
We conclude $h_\Delta$ is even in Case 2, whence $h_\Delta \ne 1$. 

(2) The Case 2 argument shows $h_\Delta$ is even for fundamental or non-fundamental $\Delta$.
\end{proof} 

\subsubsection{$\Delta^+_n = n^2-4$}\label{subsubsec:412} 
We divide the case $\Delta= \Delta^{+}=n^2-4$ ($n \ge 0$) into two subcases according to the parity of $n$.
For $n$ odd, in 1987 Mollin \cite{Mollin:87a} conjectured that $h_{n^2-4}>1$ when $\Delta= n^2-4$ is squarefree and $n \ge 21$. 
For $\Delta=n^2-4$ the squarefree assumption is equivalent to $\Delta$ being the discriminant of a maximal order with $n$ odd.
Mollin's conjecture was proved in  2007 by Byeon, Kim, and Lee \cite[Theorem 1.2]{BKL}. 
Their proof uses an adaptation of the general method of Bir\'{o}.

\begin{thm}[Byeon, Kim and Lee]\label{thm:BKL}
If $\Delta=n^2-4$ is the discriminant of a maximal order, with odd $n=2m+1>0$, 
having class number $h_\Delta=1$, then necessarily $n \le 21$.
The solutions are $n \in \{1, 3,  5, 9, 21\}$, with discriminants $\Delta \in \{-3, 5,  21, 77, 437\}$. 
\end{thm}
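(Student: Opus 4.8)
The plan is to follow the analytic method introduced by Bir\'o \cite{Biro03a} in his proof of Yokoi's conjecture (our \Cref{thm:Biro}, for the companion family $n^2+4$), in the form adapted to $n^2-4$ by Byeon, Kim and Lee \cite{BKL}. Write $\Delta = n^2-4$ with $n = 2m+1$ odd; since $\OO_\Delta$ is assumed maximal, $\Delta$ is squarefree, and as $\Delta \equiv 1 \Mod{4}$ it is a fundamental discriminant with Kronecker character $\chi_\Delta$. The factorization $\Delta = (n-2)(n+2)$ into coprime odd factors is used to evaluate $\chi_\Delta$ by quadratic reciprocity. The fundamental unit is $\e_\Delta = \e_n^{+} = \frac{1}{2}\bigl(n+\sqrt{n^2-4}\bigr)$, of norm $+1$, so $\log\e_\Delta = \log n + O(n^{-2})$, and, since no unit has norm $-1$, the narrow class number satisfies $h^{+}_\Delta = 2h_\Delta$.

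The first step converts $h_\Delta=1$ into a precise analytic equality. Dirichlet's class number formula $h_\Delta\log\e_\Delta = \frac{1}{2}\sqrt{\Delta}\,L(1,\chi_\Delta)$ shows that $h_\Delta=1$ is equivalent to $\sqrt{\Delta}\,L(1,\chi_\Delta) = 2\log\e_\Delta = 2\log n + O(n^{-2})$, i.e.\ $L(1,\chi_\Delta)$ takes a prescribed value of size $(\log n)/n$. One cannot finish with a lower bound alone: an unconditional \emph{effective} lower bound of strength $L(1,\chi_\Delta) \gg (\log n)/n$ is unavailable (Siegel's bound is ineffective), so the argument must instead produce an exact identity that is self-contradictory for large $n$.

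The heart of the proof --- and the main obstacle --- is a second, \emph{exact} evaluation of $\sqrt{\Delta}\,L(1,\chi_\Delta)$ via Hecke's integral formula and the Kronecker limit formula for real quadratic fields. When $h_\Delta=1$ the associated geodesic integral runs over a single cycle of forms in the principal class of discriminant $n^2-4$ (a class containing, e.g., the form $[1,n,1]$), and this cycle is as short as possible since the minus continued fraction of $\e_n^{+}$ is $[\,\overline{n}\,]_{-}$ and the ordinary one is $[\,n-1,\overline{1,n-2}\,]_{+}$ (\Cref{rem:CF}). Carrying out this evaluation expresses $\sqrt{\Delta}\,L(1,\chi_\Delta)$ as a finite, explicitly computable arithmetic expression in $n$ --- a weighted character sum of the shape $\sum_{k}\chi_\Delta(k)\,\overline{B}_1\!\bigl(k\nu/\Delta\bigr)$ for a fixed $\nu=\nu(n)$, equivalently a cotangent- or Dedekind-sum expression --- whose leading part is polynomial in $n$ of positive degree plus a controlled remainder. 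Equating this with $2\log\e_\Delta$ produces a single identity whose two sides have incompatible growth, and hence can hold for only finitely many, effectively bounded, $n$. Making the comparison fully effective --- tracking every implied constant and handling the boundary terms of the continued-fraction cycle so as to exclude all ``near-miss'' $n$ below the asymptotic threshold --- is the delicate part, and is where the passage from $n^2+4$ to $n^2-4$ genuinely matters: $\e_n^{+}$ has norm $+1$ (so $h^{+}_\Delta = 2h_\Delta$) and even ordinary period, reshaping the Kronecker-limit-formula sum relative to Bir\'o's case, where $\e_n^{-}$ has norm $-1$ and period one.

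Once the effective bound $n \le 21$ (equivalently $\Delta \le 437$) is in hand, a direct computation of $h_\Delta$ for odd $n$ in this finite range isolates the solutions $n\in\{1,3,5,9,21\}$, $\Delta\in\{-3,5,21,77,437\}$ --- where $n=1$ gives the imaginary quadratic discriminant $\Delta=-3$, listed because it too is the discriminant of a maximal order of class number one. In short, the routine ingredients are Dirichlet's formula, the explicit fundamental unit and continued fractions, and the terminal finite check; the substantive content, carried out in \cite{Biro03a,BKL}, is the derivation of the exact Kronecker-limit-formula identity of the third step and the effective control of its error term.
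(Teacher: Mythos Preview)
The paper does not give its own proof of \Cref{thm:BKL}; it is quoted as \cite[Theorem~1.2]{BKL}, with only the remarks that ``their proof uses the general method of Bir\'o'' and that ``the solution list in \Cref{thm:BKL} was determined numerically using the bound $n\le 21$.'' So there is no in-paper argument to compare against; your proposal is really a sketch of the method in \cite{Biro03a,BKL} itself, and should be judged on that basis.

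As such a sketch, the broad shape---exploit the very short continued-fraction cycle of $\e_n^{+}$, feed it into an exact partial-zeta / Kronecker-limit type identity, and extract an effective bound---is in the right spirit, but your third paragraph misidentifies the mechanism. The proof does not proceed by finding two evaluations of $\sqrt{\Delta}\,L(1,\chi_\Delta)$ with ``incompatible growth'': any two valid formulas for the same $L$-value agree identically, and the character sum you display is not polynomial in $n$. What Bir\'o and Byeon--Kim--Lee actually do is twist by auxiliary odd Dirichlet characters $\psi$ modulo small primes $q$ and compute (essentially) $L(0,\chi_\Delta\psi)$ in two ways: once via the class-number-one hypothesis together with integrality/divisibility properties coming from the analytic class number formula for the compositum, and once via the Shintani--Zagier formula along the length-one minus-continued-fraction cycle $[\,\overline{n}\,]_{-}$, which for $\Delta=n^2-4$ yields an explicit polynomial in $n$ with coefficients depending only on $\psi$. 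Reducing modulo $q$ turns this into a congruence $n \equiv c \Mod{q}$ for an explicit finite set of residues $c$, and running over several small primes $q$ sieves $n$ down to an effectively bounded finite set. Replace the ``incompatible growth'' narrative with this congruence-sieve mechanism and your outline will match \cite{BKL}.
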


We resolve the remaining  subcase, of orders having fundamental discriminants $\Delta= n^2-4$ having $n$ even,
using genus theory.

\begin{thm}\label{thm:RQO2} 
Let $\Delta = n^2-4$ for $n \geq 0$.
\begin{itemize}
\item[(1)] If $n=2m$ is even, and 
$\OO_\Delta$ is a maximal order 
with  class number $h_\Delta=1$, then $n \in \{0, 4\}$ 
with discriminant $\Delta \in \{-4, 12\}$.
\item[(2)]
If $n =2m \equiv 2 \Mod{4}$, and $m \ge 3$, then the class number $h_{\Delta}$ is even, regardless of whether $\OO_\Delta$ is maximal or non-maximal. 
For $m=3$ with $n =6$, we have $\Delta=32$ and $h_{\Delta}=1$.
\end{itemize} 
\end{thm}

\begin{proof}
(1) We first treat small $m$. 
First, $\Delta=-4$, coming from $(m,n)=(0,0)$, is a fundamental discriminant and has $h_\Delta=1$.
Next, $\Delta= 12$, coming from $(m,n)=(2,4)$, is a fundamental discriminant and has $h_\Delta=1$.
These give the two solutions listed in (1).
Next, $\Delta=0$, coming from $(m,n)=(1,2)$, is ruled out as not being a quadratic discriminant.
Finally, $\Delta=32$, coming from $(m,n)=(3,6)$, is a non-fundamental 
discriminant and has $h_\Delta=1$, giving the discriminant listed in (2).

In what follows it remains to consider  $m \ge 4$, so $\Delta \ge 60$.
By hypothesis $n=2m$, so $\Delta= 4m^2-4 = 4(m^2-1) $. 
We will show all such class numbers $h_\Delta$ are even, so $h_\Delta \ne 1$.
There are two cases.\\

{\bf Case 1.} {\it $m=2k$ is even, so $n =4k$, with $k \ge 2$.}\\

In this case we consider only fundamental discriminants.
Then $\Delta= 4 (4k^2-1) = \mbox{$4(2k+1)(2k-1)$}$. Let $g = \gcd(2k+1, 2k-1)$.
Then $g$ is odd and $g \divides 2$, so $g=1$. Now $2k+1 \ge 5$ has an odd prime factor,
call it $p$, and $2k-1 \ge 3$ has an odd prime factor $q$ distinct from $p$,
by relative primality. Thus $4 pq \divides \Delta$.

We show that $2 \divides h_{\Delta}$
when $\Delta$ is a fundamental discriminant, 
by excluding all cases of odd class number 
in  criterion (3) of \Cref{prop:HK}. Conditions $(a), (b)$ are ruled
out since $p q \divides \Delta$, condition $(c)$ is ruled out since $4 \divides \Delta$, and
condition  $(d)$ is ruled
out since $\Delta \ge 60$. Case 1 follows.\\

{\bf Case 2.} {\it $m=2k+1$ is odd, so $n=4k+2$, with $k \ge 2$.}\\

In this case we consider all real quadratic discriminants
$\Delta = 4(m^2-1)$. Since $m$ is odd, $x^2 \equiv 1 \Mod{8}$, so $32 \divides \Delta$,
and $\Delta$ is a quadratic discriminant.
Now the class number $h_\Delta$ is even (whether $\Delta$ is fundamental or non-fundamental)
by checking the criterion (2) of \Cref{prop:HK} for wide class number being odd.
Indeed $32 \divides \Delta$ rules out conditions (a)--(d), and case (e) is ruled out since $\Delta \ge 60$.
Case 2 follows.\\

(2) The Case 2 argument showed $h_\Delta$ is even for fundamental or non-fundamental $\Delta$.
\end{proof} 

\begin{proof}[Proof of \Cref{thm:M-classno-1}]
The result follows by combining the four cases \Cref{thm:Biro}, \Cref{thm:RQO1}, \Cref{thm:BKL}, and \Cref{thm:RQO2}.
\end{proof} 

For narrow Richaud--Degert maximal orders
of class number one, the remaining case 
not covered by the results above consists
of all squarefree discriminants $\Delta= 4n^2+1$, coming from case (2) of \Cref{prop:R--D}).
This case is Chowla's conjecture, which was formulated in 1976 (originally only for prime $\Delta$) in \cite[p.~48]{ChowlaF:1976}. It was settled by Bir\'{o} \cite[p. 179]{Biro03b} in 2003 using his method. 

\begin{thm}[Bir\'{o}]\label{thm:Biro2} 
If $\Delta = 4n^2+1$ is the 
discriminant of a maximal order having class number $h_\Delta=1$, then necessarily $n \le 1861$.
The solutions are $n \in \{1, 2, 3, 5, 7, 13 \}, $ with discriminants $\Delta \in \{5, 17, 37, 101, 197, 677\}.$
\end{thm}

The condition that $\Delta=4n^2+1$ is the discriminant of a
maximal order is equivalent to the requirement that $4n^2+1$ be squarefree,
used in Bir\'{o}'s formulation of the result.
The solution list in \Cref{thm:Biro2} was determined numerically using the bound $n \le 1861$.

\Cref{thm:Biro2} will be used in the non-maximal order case treated
in \Cref{subsec:42a}. 

\subsection{Unit-generated non-maximal orders with class number one}\label{subsec:42a}

We classify all non-maximal unit-generated quadratic orders $\OO$ of class number one.
For such orders $\OO$, all their over-orders, including the maximal order, necessarily have class number one. 
We must show an additional key property:\ If a non-maximal quadratic order $\OO$ has class number one and is unit-generated, then all its over-orders $\OO'$ necessarily are also unit-generated, with one exceptional case that is ``almost'' unit-generated.

We handle the case of prime relative conductor $\left[\OO' : \OO\right] = p$ first.

\begin{prop}\label{prop:ug-suborder-prime}
    Suppose that $\Delta = p^2\Delta'$, where $\Delta$ and $\Delta'$ are non-square discriminants and 
    $p$ is a prime number. If $\OO_\Delta$ is a unit-generated order and $h_\Delta = h_{\Delta'}$, then one of the following is true:
    \begin{itemize}
        \item[(1)] $\OO_{\Delta'}$ is a unit-generated order and $p < \frac{\log(p\sqrt{\Delta'}+1)}{\log(\sqrt{\Delta'}-1)} + 1$, from which it follows that $p \leq 17$. 
        \item[(2)] $p=2$ and $\Delta' \equiv 1 \Mod{8}$.
    \end{itemize}
\end{prop}
\begin{proof}
    Note that the quotient ideal $\colonideal{\OO_\Delta}{\OO_{\Delta'}} = p\OO_{\Delta'}$. We have by \cite[Thm.~6.5]{kopplagarias} the exact sequence
    \begin{equation}\label{eqn:exact}
        1 
        \to 
        \frac{\OO_{\Delta'}^\times}{\OO_\Delta^\times}
        \xrightarrow{\iota}
        \frac{\left(\OO_{\Delta'}/p\OO_{\Delta'}\right)^\times}{\left(\OO_{\Delta}/p\OO_{\Delta'}\right)^\times}
        \xrightarrow{\phi}
        \Cl(\OO_{\Delta})
        \xrightarrow{\psi}
        \Cl(\OO_{\Delta'})
        \to
        1.
    \end{equation}
    The equality $h_\Delta = h_{\Delta'}$ implies that $\psi$ is an isomorphism, so $\phi = 0$ and $\iota$ is an isomorphism.
    There is a ring isomorphism
    $\OO_{\Delta'}/p\OO_{\Delta'} \isom \Z[x]/(p,g(x)) \isom \F_p[x]/(g(x))$ where $g(x) = x^2 - \Delta' x + \frac{(\Delta')^2-\Delta'}{4}$ having $\disc(g) = \Delta'$. 
     Set $\omega = \frac{\Delta' + \sqrt{\Delta'}}{2}$. The subring $\OO_{\Delta}/p\OO_{\Delta'}$ 
     then has a compatible isomorphism $\OO_{\Delta}/p\OO_{\Delta'} = \frac{p\omega\Z + \Z}{p\omega\Z + p\Z} \isom \F_p$.
    The quotient of unit groups is determined by the Kronecker symbol $\left(\frac{\Delta'}{p}\right)$:
    \begin{equation}
        \frac{\left(\OO_{\Delta'}/p\OO_{\Delta'}\right)^\times}{\left(\OO_{\Delta}/p\OO_{\Delta'}\right)^\times}
        \isom \frac{\left(\F_p[x]/(g(x))\right)^\times}{\F_p^\times}
        \isom \begin{cases}
            \Z/(p-1)\Z & \text{if } \left(\frac{\Delta'}{p}\right) = 1, \\
            \Z/p\Z & \text{if } \left(\frac{\Delta'}{p}\right) = 0, \\
            \Z/(p+1)\Z & \text{if } \left(\frac{\Delta'}{p}\right) = -1.
        \end{cases}
    \end{equation}
    It follows from the fact that $\iota$ in \eqref{eqn:exact} is an isomorphism that the index
    \begin{equation}\label{eq:unitindex}
        [\OO_{\Delta'}^\times : \OO_\Delta^\times] = p - \left(\frac{\Delta'}{p}\right).
    \end{equation}
    If this index is not $1$, then $\OO_{\Delta'}$ contains a unit that is not in $\OO_\Delta$, so since $\OO_\Delta$ is unit-generated 
    and there are no other orders between $\OO_{\Delta'}$ and $\OO_{\Delta}$, we conclude that $\OO_{\Delta'}$ is unit-generated. If the index is $1$, then $p=2$ and $\left(\frac{\Delta'}{p}\right) = 1$, that is, $\Delta' \equiv \pm 1 \Mod{8}$, so $\Delta' \equiv 1 \Mod{8}$ because it is a discriminant. 

    It remains to show the upper bound on $p$ in case (1). Let $u > 1$ be a generator for $\OO_{\Delta'}^\times/\{\pm 1\}$ and 
    let $v > 1$ be a generator for $\OO_\Delta^\times/\{\pm 1\}$. 
    Let $\ep = \left(\frac{\Delta'}{p}\right)$. 
    By \eqref{eq:unitindex}, $v = u^{p-\ep}$. Moreover,
    \begin{align}
        \sqrt{\Delta'} - 1 = u - \sigma(u) - 1 &< u < u - \sigma(u) + 1 = \sqrt{\Delta'} + 1, \\
        p\sqrt{\Delta'} - 1 = v - \sigma(v) - 1 &< v < v - \sigma(v) + 1 = p\sqrt{\Delta'} + 1,
    \end{align}
    for the nontrivial Galois automorphism $\sigma$. Thus,
    \begin{align}\label{eq:pboundintermediate}
        p &= \frac{\log v}{\log u} + \ep < \frac{\log(p\sqrt{\Delta'}+1)}{\log(\sqrt{\Delta'}-1)} + 1. 
    \end{align} 
    Finally, note that the function $f(x,y) = \frac{\log(xy+1)}{\log(y-1)}$ is a decreasing function of $y$ for any $x,y > 1$, because
    \begin{align}
    	\frac{\partial}{\partial y} f(x,y) 
    	&= \frac{xy\log(y-1) - x\log(y-1) - xy\log(xy+1)- \log(xy+1)}{(y-1)(xy+1)\left(\log(y-1)\right)^2} \\
    	&< \frac{xy\log(xy+1) - x\log(y-1) - xy\log(xy+1)- \log(xy+1)}{(y-1)(xy+1)\left(\log(y-1)\right)^2} \\
    	&= \frac{- x\log(y-1) - \log(xy+1)}{(y-1)(xy+1)\left(\log(y-1)\right)^2} < 0.
    \end{align}
    From \eqref{eq:pboundintermediate}, we can conclude that
    \begin{align}\label{eq:pboundintermediate2}
    	p < f(p, \sqrt{\Delta'}) + 1 \leq f(p, \sqrt{5}) + 1.
    \end{align}
    By again taking a derivative, we can see that the function $g(x) = f(x, \sqrt{5}) + 1 - x$ is decreasing for $x>4.3$; also, $g(19) \approx -0.2 < 0$. Thus, $g(p) > 0$ (that is, \eqref{eq:pboundintermediate2}) implies $p \leq 17$.
\end{proof}

We now prove the key property for general relative conductor $\left[\OO' : \OO\right] = f$.

\begin{prop}\label{prop:ug-suborder}
    Suppose that $\Delta = f^2\Delta'$, where $\Delta$ and $\Delta'$ are non-square discriminants. 
    If $\OO_\Delta$ is a unit-generated order and $h_\Delta = h_{\Delta'}$, then one of the following is true:
    \begin{itemize}
        \item[(1)] $\OO_{\Delta'}$ is a unit-generated order.
        \item[(2)] $\OO_{\Delta'}$ is not a unit-generated order, 
        but $\OO_{4 \Delta'}$ is a unit-generated order, $2 \divides f$, and \mbox{$\Delta' \equiv 1 \Mod{8}$}.
    \end{itemize}
\end{prop}
\begin{proof}
    Let $e$ be the smallest divisor of $f$ such that $\OO_{e^2\Delta'}$ is unit-generated. If $e=1$, we are done;
    this is case (1). 
    
    Otherwise, suppose  $e \ge 2$ so $\OO_{\Delta'}$ is not a unit-generated order.
    Let $p$ be a prime divisor of $e$, and choose $p$ to be odd unless $e$ is a power of two. 
    We must have $h_{\Delta'} \leq h_{(e/p)^2\Delta'} \leq h_{e^2\Delta'} \leq h_\Delta$, 
    but also $h_{\Delta} = h_{\Delta'}$, so in fact $h_{\Delta'} = h_{(e/p)^2\Delta'} = h_{e^2\Delta'} = h_\Delta$. 
    By \Cref{prop:ug-suborder-prime} applied to the pair of discriminants $(e^2\Delta', (e/p)^2\Delta')$, we conclude that either:
    \begin{itemize}
        \item[(1')] $\OO_{(e/p)^2\Delta'}$ is a unit generated order, or
        \item[(2')] $p=2$ and $(e/p)^2\Delta' \equiv 1 \Mod{8}$.
    \end{itemize}
    The minimality of $e$ rules out case (1'). Thus, $p=2$, which forces $e$ to be a power of two, 
    and $(e/p)^2\Delta' \equiv 1 \Mod{8}$, which subsequently forces $e=2$ and $\Delta' \equiv 1 \Mod{8}$. 
    We conclude that $\OO_{4\Delta'}$ is a unit-generated order, $2 \divides f$, and $\Delta' \equiv 1 \Mod{8}$.
    This is case (2).
\end{proof}

\begin{proof}[Proof of  \Cref{thm:M-classno-2}]
    It may be checked directly 
    that each of these 
    (non-maximal)
    orders are unit-generated and have class number $1$. We 
    focus on the proof of the converse, that every unit-generated real quadratic 
    non-maximal
    order of class number $1$ is contained in the set $\{32,45,117\} \cup \{20,68,125\}$.

    Suppose that $\OO_\Delta$ is a unit-generated  order and $h_\Delta = 1$.
    Let $\Delta_0$ be the associated fundamental discriminant, and write $\Delta = f^2\Delta_0$, with $f >1$.
    We must have $h_{\Delta_0} = 1$. By \Cref{prop:ug-suborder}, one of the following holds.
    \begin{itemize}
        \item[(1)] $\OO_{\Delta_0}$ is a unit-generated order.
        \item[(2)] $\OO_{\Delta_0}$ is not a unit-generated order, but
         $\OO_{4\Delta_0}$ is a unit-generated order, $2 \divides f$, and $\Delta_0 \equiv 1 \Mod{8}$.
    \end{itemize}
   
    In case (1), \Cref{thm:M-classno-1} implies that the fundamental discriminant $\Delta_0$ is contained in the list
    \begin{equation}
        \Delta_0 \in \{5,8,12,13,21,29,53,77,173,293,437\}.
    \end{equation}
    For each fundamental $\Delta_0$ in this list, our results allow us to enumerate all possible non-fundamental values for $\Delta$ by a finite computation described as follows.
    \begin{enumerate}
        \item[(S1)] Check whether $\OO_{4\Delta_0}$ is a unit-generated order of class number $1$. (This happens for $\Delta_0 \in \{5, 8\}$,
        giving $\Delta \in \{20, 32\}$.) 
        \item[(S2)] For those cases where (S1) holds, also check whether $\OO_{4p^2\Delta_0}$ is a unit-generated order of class number $1$ for each of the finite list of primes satisfying 
        $p \leq 17$.
        (This never happens.)
        \item[(S3)] Thus, by \Cref{prop:ug-suborder-prime}, there do not exist any primes $p$ such that $\OO_{4p^2\Delta_0}$ is a unit-generated order of class number $1$.
        \item[(S4)] If $\Delta$ is even and $\Delta \neq 4\Delta_0$, applying 
        the criterion of \Cref{prop:ug-suborder} to a pair of the form $(\Delta, 4p^2\Delta_0)$ rules out $\Delta$.
        \item[(S5)] Check whether $\OO_{p^2\Delta_0}$ is a unit-generated order of class number $1$ for each of the finite list of odd primes satisfying 
        $p \leq 17$.
        (This happens for $(\Delta_0, p) \in \{(5,3), (13,3), (5,5)\}$, giving $\Delta\in \{45, 117, 125\}$.)
        \item[(S6)] For those cases where (S5) holds, also check whether $\OO_{p^2q^2\Delta_0}$ is a unit-generated order of class number $1$ for each of the finite list of odd primes satisfying 
		$q \leq 17$, allowing $q=p$.
        (This never happens.)
        \item[(S7)] Thus, by \Cref{prop:ug-suborder-prime}, there do not exist any primes $p,q$ such that $\OO_{p^2\Delta_0}$ or $\OO_{p^2q^2\Delta_0}$ is a unit-generated order of class number $1$, except for those found in (S5).
        \item[(S8)] If $\Delta$ is odd, $\Delta \neq \Delta_0$, and $\Delta$ was not found in (S5), applying \Cref{prop:ug-suborder} to a pair of the form $(\Delta, p^2\Delta_0)$ or $(\Delta, p^2q^2\Delta_0)$ to rule out $\Delta$. In the case \Cref{prop:ug-suborder}(2), we are using the conclusion of (S4) to say that $\OO_{4p^2\Delta_0}$ and $\OO_{4p^2q^2\Delta_0}$ cannot be unit-generated orders of class number $1$. 
    \end{enumerate}
    This computation yields for $\Delta= n^2-4$ with $n\in \{ 6, 7, 11\}$ the non-maximal discriminants $\Delta \in \{32,45,117\}$.
    It yields for  $\Delta= n^2+4$  with  $n \in \{4, 11\}$ the non-maximal discriminants $\Delta \in  \{ 20, 125\}$.

    In case (2), $\Delta_0 \equiv 1 \Mod{8}$,
    $\OO_{\Delta_0}$ is not a unit-generated order, and $4\Delta_0 = n^2 \pm 4$ for some integer $n$. Write $n = 2m$. 
    If $4\Delta_0 = n^2 - 4$, then $\Delta_0 = m^2 - 1 \equiv 0,3,7 \Mod{8}$, a contradiction.
    Thus $4\Delta_0 = n^2 + 4$, so $\Delta_0 = m^2 + 1$. 
    Necessarily $m$ must be even, say $m=2k$, so $\Delta_0 = 4k^2 + 1$. 
    Since $\Delta_0$ is fundamental and $h(\Delta_0)=1$, \Cref{thm:Biro2} 
    applies to give $k \in \{1,2,3,5,7,13\}$, so
    \begin{equation}
        \Delta_0 \in \{5,17,37,101,197,677\}.
    \end{equation}
    Only $\Delta_0 =17$ satisfies $\Delta_0 \equiv 1 \Mod{8}$. 
    It yields for $n=8$ the new non-maximal discriminant $\Delta = 68$.
    There are no more, as one  rules out all $\Delta = 4p^2 \Delta_0$ for $p \leq 17$
    being unit-generated orders of class number one.
\end{proof}

\begin{rmk}\label{rem:powers-of-5}
The computation  must test the unit-generated order property.
For $\Delta_k= 5^{2k+1},$ we have $h_{\Delta_k}=1$ for all $k \ge 0$.
However only $\Delta_0= 5$ and $\Delta_1=125$ are unit-generated. 
Here $\Delta_2=3125$ in Step (S5) fails to be unit generated. 
\end{rmk}

We summarize the results for unit-generated quadratic orders of class number one 
in \Cref{table:1}.

\bgroup
\def\arraystretch{1.1}
\begin{table}[h!]
\begin{tabular}{| c | c | c | }
\hline
$n$ & $\Delta= n^2- 4$ & $f_{\Delta}$\\
\hhline{|=|=|=|}
$0$ & $-4$ & $1$  \\
$1$ & $-3$ & $1$ \\
$3$ & $5$ & $1$ \\
$4$ & $12$ & $1$   \\
$5$ & $21$ & $1$ \\ 
$6$ & $32$ & $2$  \\
$7$ & $45$ & $3$ \\
$9$ & $77$ & $1$  \\
$11$ & $117$ & $3$   \\
$21$ & $437$ & $1$ \\
\hline
\end{tabular}
\quad \quad \quad
\begin{tabular}{| c | c | c | }
\hline
$n$ & $\Delta= n^2+4$ & $f_{\Delta}$  \\
\hhline{|=|=|=|}
$1$ & $5$ & $1$ \\
$2$ & $8$ & $1$  \\
$3$ & $13$ & $1$ \\
$4$ & $20$ & $2$  \\
$5$ & $29$ & $1$ \\
$7$ & $53$ & $1$  \\
$8$ & $68$ & $2$  \\
$11$ & $125$ & $5$  \\
$13$ & $173 $ & $1$ \\
$17$ & $293$ & $1$ \\
\hline
\end{tabular}
\medskip
\caption{Quadratic order discriminants $\Delta=n^2+r = (f_{\Delta})^2\Delta_0$ with $r \in \{\pm 4\}$ and class number $h_\Delta=1$.
}
\label{table:1}
\end{table}
\egroup
There are $11$  real quadratic  fields having unit-generated maximal orders having class number one, compared with $9$ imaginary quadratic
fields having  maximal order having  class number one.
There are $17$ unit-generated real quadratic orders having class number one, compared with $15$ imaginary quadratic
orders having class number one.

\section{Unit-generated orders with one class per genus} \label{sec:5}

We treat ideal class groups for unit-generated orders that have one class per genus.
Technically we treat a more general problem, that of determining all unit-generated orders whose 
(wide) class groups
are of exponent $2$, that is, $\Cl(\OO_{\Delta}) = \Cl(\OO_{\Delta})[2]$. 

\subsection{Gauss one class per genus problem}\label{subsec:51}

A famous problem of Gauss is the one class  per genus problem for
positive definite (primitive) integral binary quadratic forms 
of negative discriminant $\Delta = -4D$. 
\begin{enumerate}
\item[(1)]
In {\em Disquisitiones Arithmeticae}, Gauss \cite[Article 304]{Gauss1801} gave an empirical list of $65$
discriminants\footnote{Gauss \cite{Gauss1801} used the determinant $D=ac-b^2 = - \frac{1}{4}\Delta$,
rather than the discriminant.} of positive definite binary quadratic forms (restricted to  $\Delta \equiv 0 \Mod{4}$) having one class per genus,
and asked if the list was complete.  

\item[(2)]
Gauss noted that his list  agreed with Euler's list of $65$ ``idoneal numbers'' $D$ (taking $\Delta = -4D$),
which he knew of from a published letter of Euler  \cite{Euler1778}. 
Euler's concept of ``numerus idoeneus'' (convenient numbers)
was: An integer $D \geq 1$ is \textit{idoneal} if, whenever $m$ is an odd integer and $x^2+Dy^2=m$ 
has a unique solution in coprime integers $x,y \geq 0$, it follows that $m$ is prime; see \cite{Euler1778b,Kani11}. 
Euler found $65$ positive integers $D$ with this property.
The equivalence of the Euler definition  to the one class per genus problem  of Gauss for $\Delta \equiv 0 \Mod{4}$ was proved by Grube \cite{Grube1874}
in 1874, as described in Kani \cite{Kani11}; see also Frei \cite{Frei1984}.

\item[(3)] Work of Leonard Eugene Dickson extended the list of {discriminants} to include odd discriminants $\Delta \equiv 1 \Mod{4}$, 
finding $36$ discriminants $\Delta \equiv 1 \Mod{4}$ having one class per genus.
There are $101$ discriminants in the two lists combined; see \cite{BriggsC:54}.
\end{enumerate}

The Gauss one class per genus problem for binary forms of negative discriminant remains unsolved. 
See Stark \cite{Stark:07} for information on its status and Lemmermeyer \cite{Lemmermeyer:07} for further history of genus theory.

The one class per genus problem for binary quadratic forms 
was carried over  to ideals of orders of quadratic number fields, 
by Dedekind \cite{Dedekind:1877,Dedekind:1877a,Dedekind:1894}.
For  the genus theory of real quadratic orders, one must use the narrow class group $\Cl^{+}(\OO_{\Delta})$
rather than the class group $\Cl(\OO_{\Delta})$;
see \Cref{subsec:52}.

One may view the one class per genus question for  unit generated orders of real quadratic fields
as an analogue of the Gauss one class per genus  problem, either with or without
the restriction to $\Delta \equiv 0 \Mod{4}$.

\subsection{Finiteness of unit-generated orders with one class per genus}\label{subsec:52} 

We recall facts about 
the genus theory of Gauss for general binary quadratic forms, described in \cite[Chapter 6.5]{Halter-Koch13},
and its relation to  the narrow  class group $\Cl^{+}(\OO_{\Delta})$ and
(wide) class group $\Cl(\OO_{\Delta})$ 
of a quadratic order $\OO_{\Delta}$
of the same discriminant.
\begin{enumerate}
\item
The integral binary quadratic form class group $\mathfrak{F}_{\Delta}$ of
$\SL_2(\Z)$-classes of primitive binary quadratic forms of discriminant $\Delta$, having group law given by Gauss composition of forms, is isomorphic to 
the narrow ideal class group $\Cl^{+}(\OO_{\Delta})$; see \cite[Theorem 6.4.2]{Halter-Koch13}.
\item
There is a surjective homomorphism $ \Cl^{+}(\OO_{\Delta}) \to \Cl(\OO_{\Delta})$,
with kernel of size $1$ or $2$.
\item 
The group of (Gauss) genera of quadratic forms is $\mathfrak{F}_{\Delta}/ \mathfrak{F}_{\Delta}^2$ and is isomorphic to $\Cl^{+}(\OO)/ \Cl^{+}(\OO)^2$, which we call the \textit{genus group}. 
The group $\Cl(\OO)/\Cl(\OO)^2$ is a quotient of the genus group, with quotient map from
$\Cl^{+}(\OO)/ \Cl^{+}(\OO)^2$ to $\Cl(\OO)/\Cl(\OO)^2$ having
kernel of size $1$ or $2$.
(The statement that the principal genus is $\Cl^{+}(\OO)^2$ is sometimes
called the {\em principal genus theorem}; see \cite{Lemmermeyer:07}.)
\end{enumerate}

The group of genera of a real quadratic order has order $2^{r+j}$, where $r$ is the number of distinct odd primes dividing the discriminant $\Delta$,
and $-1 \le j \le 1$. To state its order precisely,
write $\Delta= \sgn(\Delta)2^{e_0} p_1^{e_1} \cdot \ldots \cdot p_r^{e_r}$, with 
$e _0 \ge 0, e_i \ge 1$, for $i \ge 1$,
and define $\mu(\Delta)$ by
\begin{equation}
\mu(\Delta) = 
\begin{cases}
r & \quad \mbox{if} \quad \Delta \equiv 1 \Mod{4} \,\,\, \mbox{or} \,\,\, \Delta \equiv 4 \Mod{16}, \\
r+1 & \quad \mbox{if} \quad \Delta \equiv 8 \,\, \mbox{or} \,\, 12 \Mod{16} \,\,\, \mbox{or} \,\,\, \Delta \equiv 16 \Mod{32}, \\
r+2 & \quad \mbox{if} \quad \Delta \equiv 0 \Mod{32}.
\end{cases} 
\end{equation} 
Then its order is given by \cite[Theorem 6.5.2]{Halter-Koch13}, which we state as \Cref{prop:genusorder}.

\begin{prop}\label{prop:genusorder}
If $\Delta$ is a (real or imaginary) quadratic discriminant, 
then 
\begin{equation}
\abs{\mathfrak{F}_\Delta[2]} = \abs{\mathfrak{F}_{\Delta}/ \mathfrak{F}_{\Delta}^2} = 2^{\mu(\Delta)-1}.
\end{equation}
\end{prop} 

We upper-bound $\mu(\Delta)$ in terms of $\abs{\Delta}$.
We have $\mu(\Delta)-1 \le \omega(\Delta)$,
where $\omega(m)$ counts the number of distinct prime divisors of $m$ (without multiplicity).
It is well known that $\omega(m) = O\!\left(\frac{\log m}{\log\log m}\right)$ for $m \ge 2$ 
\cite[Theorem 2.10]{MV},
so we conclude, for $\abs{\Delta} \ge 20$, 
\begin{equation}\label{eqn:omega-bound}
\mu(\Delta)-1 = O\!\left(\frac{\log \abs{\Delta}}{\log\log \abs{\Delta}}\right).
\end{equation}
The constant in the $O$-notation is effectively computable. 
 
\begin{proof}[Proof of \Cref{thm:twotorsion}]
We first consider arbitrary quadratic discriminants $\Delta$ (not necessarily unit-generated) such that $\Cl(\OO_{\Delta})$ is $2$-torsion.
Then by \Cref{prop:genusorder} we have
\begin{equation}\label{eqn:one-class-genus-bound0}
\log \abs{\Cl(\OO_{\Delta})} \le \log \abs{\Cl^{+}(\OO_{\Delta})} = \log\abs{\mathfrak{F}_\Delta[2]} 
= \log 2^{\mu(\Delta)-1}
\end{equation}
Using  \eqref{eqn:omega-bound}, we obtain, for $\abs{\Delta} \ge 20$, 
\begin{equation}\label{eqn:one-class-genus-bound}
\log \abs{\Cl(\OO_{\Delta})} = O\!\left(\frac{\log \abs{\Delta}}{\log\log \abs{\Delta}}\right).
\end{equation}
From \eqref{eqn:one-class-genus-bound} it follows that, for any infinite family $\mathcal{F}$ of 
real quadratic  orders in which $\Cl(\OO_\Delta)$ is $2$-torsion, we necessarily have 
\begin{equation}
\log \abs{\Cl(\OO_{\Delta})} = o(\log \Delta) \quad \mbox{as} \quad \Delta \to \infty, \quad \mbox{with} \quad \Delta \in \mathcal{F}.
\end{equation}
For real quadratic orders such infinite families do exist since, as noted earlier, there exist infinitely many real quadratic orders having class number $1$.
 
Now we restrict to unit-generated orders. To show there are only finitely many unit-generated orders $\OO_\Delta$ 
such that $\Cl(\OO_\Delta) = \Cl(\OO_\Delta)[2]$,
we argue by contradiction. 
So suppose there are infinitely many such $\Delta_n^\pm > 0$ in a family $\mathcal{F}= \{n: n = n_i, \,i \ge 1\}$, with $n_1 < n_2 < n_3 < \cdots$. 
For unit-generated orders we have $\Delta_n \le n^2 + 4$, so $\log \Delta_n= O(\log n)$, and by the argument above, 
\begin{equation}
\log \abs{\Cl(\OO_{\Delta_{n}})} = o(\log n) \quad \mbox{as} \quad k \to \infty, \quad \mbox{with} \quad n=n_i \in \mathcal{F}.
\end{equation}
This limiting behavior contradicts 
Theorem \ref{thm:bs00}(1), which asserts for unit-generated orders that 
\begin{equation}\label{eqn:ineffective2}
\log \abs{\Cl(\OO_{\Delta_n})} = \log n + o(\log n) \quad \mbox{as} \quad n \to \infty.
\end{equation}
We conclude there must be  only finitely many unit-generated real quadratic orders 
that have a (wide) class group that is  $2$-torsion.
\end{proof}

\subsection{Computations of unit-generated orders having (wide or narrow) class group of exponent $2$}\label{subsec:53}

We give lists of all unit-generated quadratic orders having wide class group of exponent $2$ 
(that is, $\Cl(\OO_{\Delta})= \Cl(\OO_{\Delta})[2]$) and having discriminant $\Delta< 10^{10}$.
We also list the subset of those unit-generated orders having narrow class number of exponent $2$ 
(that is, $\Cl^+(\OO_{\Delta})= \Cl^+(\OO_{\Delta})[2]$, or ``one class per genus'').  
We find a total of $86$ unit-generated quadratic orders having $2$-torsion (wide) class group and discriminant $\Delta < 10^{10}$, 
which we list in \Cref{table:2A} and \Cref{table:2B}.
(The discriminant $\Delta=5$ again appears twice.) 
In the case $\Delta=n^2+4$ in  \Cref{table:2B}, all wide class groups coincide with narrow class groups, because the fundamental unit has norm $-1$.

These tables include all known unit-generated quadratic orders having one class per genus. In the case $\Delta=n^2+4$,
all discriminants in \Cref{table:2B} have one class per genus, since
\mbox{$\Cl^{+}(\Delta) \isom \Cl(\Delta)$} because their generating unit has norm $-1$.
In \Cref{table:2A}, only the rows with \mbox{$\Cl^{+}(\OO_{\Delta}) \isom (\Z/2\Z)^j$}
have one class per genus.

We computed the data for \Cref{table:2A} and \Cref{table:2B} using \texttt{SageMath}. Our code is publicly available on \texttt{GitHub} at \url{https://github.com/gskopp/UnitGeneratedOrders}.

\bgroup
\begin{small}
\def\arraystretch{1.1}
\begin{table}[h!]
\begin{tabular}{| c | c | c | c | }
\hline
$\Cl(\OO_\Delta)$ & $\Cl^+(\OO_\Delta)$ & $\Delta= n^2 -4$ & $f_{\Delta}$ \\
\hhline{|=|=|=|=|}
$1$ & $1$ & $-4, -3, 5$ & $1$  \\
\hline
$1$ & $\Z/2\Z$ & $12, 21, 77, 437$ & $1$  \\
    & & $32$ & $2$ \\
    & & $45, 117$ & $3$  \\
\hline
$\Z/2\Z$ & $\left(\Z/2\Z\right)^2$ & $60, 140, 165, 285, 357, 572, 957, 1085, 2397$ & $1$  \\
         & & $96$ & $2$ \\
         & & $252$ & $3$  \\
         & & $192$ & $4$  \\
         & & $525$ & $5$  \\
         & & $320$ & $8$  \\
\hline
$\Z/2\Z$ & $\Z/4\Z$ & $221, 1517$ & $1$  \\
                  & & $725$ & $5$ \\
\hline
$\left(\Z/2\Z\right)^2$ & $\left(\Z/2\Z\right)^3$ & $780, 1020, 1365, 1932, 2805, 4485, 5180, 7917, 8645$ & $1$  \\
                        & & $480, 672, 1760, 2912$ & $2$ \\
                        & & $1440$ & $6$  \\
                        & & $2112$ & $8$  \\
\hline
$\left(\Z/2\Z\right)^2$ & $\Z/2\Z \times \Z/4\Z$ & $3965, 7565$ & $1$  \\
\hline
$\left(\Z/2\Z\right)^3$ & $\left(\Z/2\Z\right)^4$ & $4620, 12540, 26565$ & $1$  \\
                        & & $3360, 7392, 14880, 19040, 23712, 27552$ & $2$  \\
                        & & $6720$ & $8$  \\
\hline
$\left(\Z/2\Z\right)^4$ & $\left(\Z/2\Z\right)^5$ & $68640$ & $2$ \\
\hline
\end{tabular}
\medskip
\caption{Discriminants $\Delta=n^2-4 = (f_{\Delta})^2\Delta_0$ 
having (wide) class group $\Cl(\OO_\Delta)=\Cl(\OO_\Delta)[2]$, 
complete for $\Delta < 10^{10}$. The list is not known to be complete without the restriction $\Delta < 10^{10}$.}
\label{table:2A}
\end{table}
\end{small}
\egroup
%

\bgroup
\begin{small}
\def\arraystretch{1.1}
\begin{table}[h!]
\begin{tabular}{| c | c | c | c |}
\hline
$\Cl(\OO_\Delta)=\Cl^+(\OO_\Delta)$ & $\Delta= n^2 + 4$ 
& $f_{\Delta}$  \\
\hhline{|=|=|=|}
$1$ & $5, 8, 13, 29, 53, 173, 293$ & $1$  \\
    & $20, 68$ & $2$ \\
    & $125$ & $5$  \\
\hline
$\Z/2\Z$ & $40, 85, 104, 365, 488, 533, 629, 965, 1448, 1685, 1853, 2813$ & $1$  \\
         & $260$ & $2$ \\
         & $200$ & $5$  \\
         & $845$ & $13$  \\
\hline
$\left(\Z/2\Z\right)^2$ & $680, 1160, 2120, 2405, 3485, 3848, 5480, 10205, 16133$ & $1$ \\
\hline
$\left(\Z/2\Z\right)^3$ & $8840, 21320, 32045$ & $1$  \\
\hline
\end{tabular}
\medskip
\caption{Discriminants $\Delta=n^2+4= (f_{\Delta})^2\Delta_0$ having wide  class group $\Cl(\OO_\Delta)=\Cl(\OO_\Delta)[2]$, complete for $\Delta < 10^{10}$. The list is not known to be complete without the restriction $\Delta < 10^{10}$.}
\label{table:2B}
\end{table}
\end{small}
\egroup

A list of discriminants having exponent $2$ class groups for maximal orders of extended Richaud--Degert type was previously computed by Loboutin, Mollin, and Williams \cite{LouboutinMW93}, complete under the assumption of the generalized Riemann hypothesis. We have checked that our tables match theirs where they
overlap.\footnote{
They list such orders in their Tables 1 and 3, which exclude those with class number $1$.
Their Tables 1 and 3 include maximal orders for $\Delta = n^2+4$, and their list matches our \Cref{table:2B} in the rows labeled $f=1$.
Their Table $1$ includes maximal orders with $\Delta=n^2-4$ and $2 \divides \Delta$ (with $D= \Delta/4$),
and it matches our \Cref{table:2A} for $f=1$ in these cases. 
All the odd discriminant cases, with $\Delta=D$, appear in their Table 3.}

\begin{rmk}\label{rem:im-quadratic}
For comparison with the imaginary quadratic case, the results of Frei \cite[Theor\'{e}me 4.5]{Frei1984} show
the largest (absolute) discriminant of the $65$ even discriminants is $-\Delta=2^2 \cdot 1848= 7392$, and the largest of the $36$
odd discriminants is $-\Delta=3315$.
\end{rmk}

\section{Concluding remarks}\label{sec:6aa}

The families of real quadratic orders with $\Delta = \Delta^+_n = n^2-4$ and $\Delta = \Delta^-_n = n^2+4$ behave like
imaginary quadratic fields in terms of the rate of growth of their class number. One may ask whether other class group statistics for these families
has similar behavior 
to the imaginary quadratic case. For example,
one such statistic is the average size of the $m$-torsion, for fixed $m$,
$\frac{1}{X}\sum_{n \leq X} \abs{\Cl(\OO_{\Delta^\pm_n})[m]}$ as $X \to \infty$.
The case $m=2$ reduces by genus theory to a statistic on the number of distinct prime divisors of $\Delta^\pm_n$, which seems approachable. 

\Cref{prob:2}, the classification of unit-generated quadratic orders whose class group is $2$-torsion,
subsumes a real quadratic analogue of the Gauss one class per genus problem, which itself is currently unsolved. 
The complete classification of unit-generated orders with $2$-torsion class group seems out of reach at this time, in parallel with the idoneal number problem being unsolved.

The definition of unit-generated orders makes sense for arbitrary number fields. 
For real quadratic, complex cubic, and totally complex quartic fields, which are the cases where the rank of the unit group is one, this definition implies ``small regulator'' and hence ``large class number.''
One may hope for analogues of results of this paper in the complex cubic and totally complex quartic cases.

\section*{Acknowledgements}\label{sec:ack}

Work of the first author was supported by NSF grant DMS-2302514. 
Work of the second author was partially supported by NSF grant DMS-1701576.

We thank Henri Darmon for suggesting the key idea behind the proof of \Cref{prop:ug-suborder-prime}.
We are grateful to the developers of \texttt{SageMath}, which we used to compute class numbers and other properties of real quadratic orders.

\bibliographystyle{abbrv}
\bibliography{references}

\end{document}